\newtheorem{theorem}{Theorem}[section]
\newtheorem{proposition}[theorem]{Proposition}
\newtheorem{lemma}[theorem]{Lemma}
\theoremstyle{definition}
\begin{document}

\title{A $G$-family of quandles and handlebody-knots}
\author{Atsushi Ishii, Masahide Iwakiri, Yeonhee Jang and Kanako Oshiro}
\date{}

\maketitle

\begin{abstract}
We introduce the notion of a $G$-family of quandles which
is an algebraic system whose axioms are motivated by handlebody-knot
theory, and use it to construct invariants for handlebody-knots.
Our invariant can detect the chiralities of some handlebody-knots
including unknown ones.
\end{abstract}

\section{Introduction}
\label{sec:introduction}

A quandle \cite{Joyce82,Matveev82} is an algebraic system whose axioms
are motivated by knot theory.
Carter, Jelsovsky, Kamada, Langford, and Saito
\cite{CarterJelsovskyKamadaLangfordSaito03} defined the quandle homology
theory and quandle cocycle invariants for links and surface-links.
The quandle chain complex in \cite{CarterJelsovskyKamadaLangfordSaito03}
is a subcomplex of the rack chain complex in
\cite{FennRourkeSanderson95}.
The quandle cocycle invariant extracts information from quandle
colorings by a quandle cocycle, and are used to detect the chirality of
links in \cite{CarterKamadaSaito01,RourkeSanderson}.

In this paper, we introduce the notion of a $G$-family of quandles which
is an algebraic system whose axioms are motivated by handlebody-knot
theory, and use it to construct invariants for handlebody-knots.
A \textit{handlebody-knot} is a handlebody embedded in the 3-sphere.
A handlebody-knot can be represented by its trivalent spine, and the
first author, in \cite{Ishii08}, gave a list of local moves connecting
diagrams of spatial trivalent graphs which represent equivalent
handlebody-knots.
The axioms of a $G$-family of quandles are derived from the local
moves.

A $G$-family of quandles gives us not only invariants for
handlebody-knots but also a way to handle a number of quandles at once.
We see that a $G$-family of quandles is indeed a family of quandles
associated with a group $G$.
Any quandle is contained in some $G$-family of quandles as we see in
Proposition~\ref{prop:examples_G-family}.
We introduce a homology theory for $G$-families of quandles.
A cocycle of a $G$-family of quandles gives a family of cocycles of
quandles.
Thus it is efficient to find cocycles of a $G$-family of quandles, and
indeed Nosaka \cite{Nosaka} gave some cocycles together with a method to
construct a cocycle of a $G$-family of quandles induced by a
$G$-invariant group cocycle.

A $G$-family of quandles induces a quandle which contains all quandles
forming the $G$-family of quandles as subquandles.
This quandle, which we call the associated quandle, has a suitable
structure to define colorings of a diagram of a handlebody-knot.
Putting weights on colorings with a cocycle of a $G$-family of quandles,
we define a quandle cocycle invariant for handlebody-knots.
In \cite{IshiiIwakiri12}, the first and second authors defined quandle
colorings and quandle cocycle invariants for handlebody-links by
introducing the notion of an $A$-flow for an abelian group $A$.
Quandle cocycle invariants we define in this paper are nonabelian
versions of the invariants.
A usual knot can be regarded as a genus one handlebody-knot by taking
its regular neighborhood, and some knot invariants have been modified
and generalized to construct invariants for handlebody-knots.
In \cite{JangOshiro}, the third and fourth authors defined symmetric
quandle colorings and symmetric quandle cocycle invariants for
handlebody-links by generalizing symmetric quandle cocycle invariants of
classical knots given in \cite{Kamada07,KamadaOshiro}.

A table of genus two handlebody-knots with up to 6 crossings is given in
\cite{IshiiKishimotoMoriuchiSuzuki11}, and the handlebody-knots
$0_1,\ldots,6_{16}$ in the table were proved to be mutually distinct by
using the fundamental groups of their complements, quandle cocycle
invariants in \cite{IshiiIwakiri12} and some topological arguments in
\cite{IshiiKishimotoOzawa,LeeLee}.
Our quandle cocycle invariant can distinguish the handlebody-knots
$6_{14}$ and $6_{15}$ whose complements have isomorphic fundamental
groups, and detect the chiralities of the handlebody-knots $5_2$, $5_3$,
$6_5$, $6_9$, $6_{11}$, $6_{12}$, $6_{13}$, $6_{14}$, $6_{15}$.
In particular, the chiralities of $5_3$, $6_5$, $6_{11}$ and $6_{12}$
were not known.

This paper is organized as follows.
In Section~\ref{sec:G-family}, we give the definition of a $G$-family of
quandles together with some examples.
In Section~\ref{sec:coloring}, we describe colorings with a $G$-family
of quandles for handlebody-links.
We define the homology for a $G$-family of quandles in
Section~\ref{sec:homology} and define several invariants for
handlebody-links including quandle cocycle invariants in
Section~\ref{sec:cocycle_invariants}.
In Section~\ref{sec:applications}, we calculate quandle cocycle
invariants for handlebody-knots with up to $6$ crossings and show the
chirality for some of the handlebody-knots.
In Section~\ref{sec:generalization}, we prove that our invariants can be
regarded as a generalization of the invariants defined in
\cite{IshiiIwakiri12}.

\section{A $G$-family of quandles}
\label{sec:G-family}

A \textit{quandle}~\cite{Joyce82,Matveev82} is a non-empty set $X$ with
a binary operation $*:X\times X\to X$ satisfying the following axioms.
\begin{itemize}
\item For any $x\in X$, $x*x=x$.
\item For any $x\in X$, the map $S_x:X\to X$ defined by $S_x(y)=y*x$ is
a bijection.
\item For any $x,y,z\in X$, $(x*y)*z=(x*z)*(y*z)$.
\end{itemize}
A \textit{rack} is a non-empty set $X$ with a binary operation
$*:X\times X\to X$ satisfying the second and third axioms.
When we specify the binary operation $*$ of a quandle (resp.~rack) $X$,
we denote the quandle (resp.~rack) by the pair $(X,*)$.
An \textit{Alexander quandle} $(M,*)$ is a $\Lambda$-module $M$ with the
binary operation defined by $x*y=tx+(1-t)y$, where
$\Lambda:=\mathbb{Z}[t,t^{-1}]$.
A \textit{conjugation quandle} $(G,*)$ is a group $G$ with the binary
operation defined by $x*y=y^{-1}xy$.

Let $G$ be a group with identity element $e$.
A \textit{$G$-family of quandles} is a non-empty set $X$ with a family
of binary operations $*^g:X\times X\to X$ ($g\in G$) satisfying the
following axioms.
\begin{itemize}
\item For any $x\in X$ and any $g\in G$, $x*^gx=x$.
\item For any $x,y\in X$ and any $g,h\in G$,
\[ x*^{gh}y=(x*^gy)*^hy\text{ and }x*^ey=x. \]
\item For any $x,y,z\in X$ and any $g,h\in G$,
\[ (x*^gy)*^hz=(x*^hz)*^{h^{-1}gh}(y*^hz). \]
\end{itemize}
When we specify the family of binary operations $*^g:X\times X\to X$
($g\in G$) of a $G$-family of quandles, we denote the $G$-family of
quandles by the pair $(X,\{*^g\}_{g\in G})$.

\begin{proposition} \label{prop:associated_quandle}
Let $G$ be a group.
Let $(X,\{*^g\}_{g\in G})$ be a $G$-family of quandles.
\begin{itemize}
\item[(1)]
For each $g\in G$, the pair $(X,*^g)$ is a quandle.
\item[(2)]
We define a binary operation
$*:(X\times G)\times(X\times G)\to X\times G$ by
\[ (x,g)*(y,h)=(x*^hy,h^{-1}gh). \]
Then $(X\times G,*)$ is a quandle.
\end{itemize}
\end{proposition}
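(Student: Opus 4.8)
The plan is to derive both statements by specializing the three axioms of a $G$-family of quandles and then carrying out the bookkeeping needed to check the quandle axioms directly; no new ideas beyond the axioms are required, only correct indexing of the group exponents.

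For part (1), fix $g\in G$ and verify the three quandle axioms for $(X,*^g)$. Idempotency $x*^gx=x$ is exactly the first axiom. For the invertibility of $S_x(y)=y*^gx$, I would apply the second axiom $x*^{gh}y=(x*^gy)*^hy$ with $h=g^{-1}$, using $x*^ey=x$, to get $(x*^gy)*^{g^{-1}}y=x$ and symmetrically $(x*^{g^{-1}}y)*^gy=x$; this shows the map $z\mapsto z*^{g^{-1}}x$ is a two-sided inverse of $S_x$, so $S_x$ is a bijection. For self-distributivity, I would apply the third axiom with $h=g$, noting $g^{-1}gg=g$, which yields $(x*^gy)*^gz=(x*^gz)*^g(y*^gz)$.

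For part (2), I would verify the three quandle axioms for $(x,g)*(y,h)=(x*^hy,h^{-1}gh)$ on $X\times G$. Idempotency holds because $(x,g)*(x,g)=(x*^gx,g^{-1}gg)=(x,g)$, using the first axiom and $g^{-1}gg=g$. For invertibility, I would observe that the right multiplication $S_{(y,h)}(x,g)=(x*^hy,h^{-1}gh)$ splits as a product $\phi\times\psi$, where $\phi(x)=x*^hy$ is a bijection of $X$ by part (1) (it is $S_y$ in the quandle $(X,*^h)$) and $\psi(g)=h^{-1}gh$ is an inner automorphism of $G$; a product of bijections is a bijection.

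The crux is self-distributivity in part (2), and this is the step I expect to require the most care. I would expand both sides in coordinates: the left side $((x,g)*(y,h))*(z,k)$ has first coordinate $(x*^hy)*^kz$ and second coordinate $k^{-1}(h^{-1}gh)k$, while the right side $((x,g)*(z,k))*((y,h)*(z,k))$ has first coordinate $(x*^kz)*^{k^{-1}hk}(y*^kz)$ and second coordinate $(k^{-1}hk)^{-1}(k^{-1}gk)(k^{-1}hk)$. The two second coordinates both collapse to $k^{-1}h^{-1}ghk$ by cancellation in $G$, and the two first coordinates agree precisely by the third axiom applied with the pair $(h,k)$ in place of $(g,h)$. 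The main obstacle is therefore purely notational: confirming that the conjugating exponent $k^{-1}hk$ produced by the definition of $*$ is exactly the exponent demanded by the third axiom, so that the self-distributive law of the $G$-family feeds directly into the self-distributive law of $(X\times G,*)$.
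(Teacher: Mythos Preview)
Your proof is correct. Part~(1) is essentially identical to the paper's argument. For part~(2), however, the paper takes a different route: rather than verifying the three quandle axioms for $(X\times G,*)$ directly, the authors first introduce the notion of a $Q$-family of quandles for an arbitrary quandle $(Q,\triangleleft)$ and prove a general lemma (Lemma~\ref{lem:Q-family}) that $(X\times Q,*)$ is a quandle whenever $X$ is a $Q$-family of quandles. Part~(2) then drops out by observing that a $G$-family of quandles is exactly a $Q$-family for the conjugation quandle $(G,\triangleleft)$, $a\triangleleft b=b^{-1}ab$. Your direct verification is more self-contained and avoids introducing the auxiliary notion, while the paper's detour through Lemma~\ref{lem:Q-family} isolates the fact that only the conjugation quandle structure of $G$ is used, and yields a statement (with its converse, noted after the lemma) of independent interest.
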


We call the quandle $(X\times G,*)$ in
Proposition~\ref{prop:associated_quandle} the
\textit{associated quandle} of $X$.
We note that the involution $f:X\times G\to X\times G$ defined by
$f((x,g))=(x,g^{-1})$ is a good involution of the associated quandle
$X\times G$, where we refer the reader to~\cite{Kamada07} for the
definition of a good involution of a quandle.
Before proving this proposition, we introduce a notion of a $Q$-family
of quandles.
Let $(Q,\triangleleft)$ be a quandle.
A \textit{$Q$-family of quandles} is a non-empty set $X$ with a family
of binary operations $*^a:X\times X\to X$ ($a\in Q$) satisfying the
following axioms.
\begin{itemize}
\item For any $x\in X$ and any $a\in Q$, $x*^ax=x$.
\item For any $x\in X$ and any $a\in Q$, the map $S_{x,a}:X\to X$
defined by $S_{x,a}(y)=y*^ax$ is a bijection.
\item For any $x,y,z\in X$ and any $a,b\in Q$,
$(x*^ay)*^bz=(x*^bz)*^{a\triangleleft b}(y*^bz)$.
\end{itemize}
Let $Q$ be a rack.
A \textit{$Q$-family of racks} is a non-empty set $X$ with a family of
binary operations $*^a:X\times X\to X$ ($a\in Q$) satisfying the second
and third axioms.

\begin{lemma} \label{lem:Q-family}
Let $(Q,\triangleleft)$ be a quandle (resp.~rack).
Let $(X,\{*^a\}_{a\in Q})$ be a $Q$-family of quandles (resp.~racks).
We define a binary operation
$*:(X\times Q)\times(X\times Q)\to X\times Q$ by
\[ (x,a)*(y,b)=(x*^by,a\triangleleft b). \]
Then $(X\times Q,*)$ is a quandle (resp.~rack).
\end{lemma}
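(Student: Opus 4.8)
The plan is to verify directly that the binary operation
\[ (x,a)*(y,b)=(x*^by,a\triangleleft b) \]
satisfies the three quandle axioms (or the last two, in the rack case), reducing each one to the corresponding axiom for the $Q$-family $(X,\{*^a\}_{a\in Q})$ together with the axiom for the quandle (resp.\ rack) $(Q,\triangleleft)$. The point is that the two coordinates decouple: the second coordinate is governed entirely by $\triangleleft$, while the first coordinate mixes both structures in exactly the way the $Q$-family axioms are designed to handle.

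First I would check idempotency (the quandle case only). For $(x,a)\in X\times Q$ we compute $(x,a)*(x,a)=(x*^ax,a\triangleleft a)=(x,a)$, using the first $Q$-family axiom $x*^ax=x$ in the first coordinate and quandle idempotency $a\triangleleft a=a$ in the second. Next I would verify that right multiplication $S_{(y,b)}$ is a bijection. Since the map sends $(x,a)\mapsto(x*^by,a\triangleleft b)$ and the two coordinates are independent, it factors as the product of $S_{x,b}$-type maps: the first coordinate $x\mapsto x*^by$ is a bijection by the second $Q$-family axiom (the map $S_{x,b}$ being a bijection), and the second coordinate $a\mapsto a\triangleleft b$ is a bijection because $(Q,\triangleleft)$ is a quandle (resp.\ rack). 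A product of bijections is a bijection, so $S_{(y,b)}$ is bijective.

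The main work, and the step I expect to be the central one, is the right self-distributivity axiom. I would expand both sides of
\[ \bigl((x,a)*(y,b)\bigr)*(z,c)=\bigl((x,a)*(z,c)\bigr)*\bigl((y,b)*(z,c)\bigr) \]
coordinatewise. The left-hand side is $(x*^by,a\triangleleft b)*(z,c)=\bigl((x*^by)*^cz,(a\triangleleft b)\triangleleft c\bigr)$, while the right-hand side is $\bigl((x*^cz)*^{b\triangleleft c}(y*^cz),(a\triangleleft c)\triangleleft(b\triangleleft c)\bigr)$. In the second coordinate, equality $(a\triangleleft b)\triangleleft c=(a\triangleleft c)\triangleleft(b\triangleleft c)$ is precisely the self-distributive axiom for $(Q,\triangleleft)$. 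In the first coordinate, the required identity $(x*^by)*^cz=(x*^cz)*^{b\triangleleft c}(y*^cz)$ is exactly the third $Q$-family axiom. Thus both coordinates match, and the distributivity holds. The only subtlety is bookkeeping: one must track that the superscript $b\triangleleft c$ appearing in the $Q$-family axiom matches the element produced in the second coordinate of $(y,b)*(z,c)$, which it does by construction. Since the $Q$-family axioms were evidently reverse-engineered to make this matching automatic, no genuine obstacle arises; the proof is a routine coordinatewise verification.
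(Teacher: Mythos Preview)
Your proof is correct and follows essentially the same coordinatewise verification as the paper: idempotency from $x*^ax=x$ and $a\triangleleft a=a$, bijectivity of $S_{(y,b)}$ from the bijectivity of $S_{y,b}$ on $X$ and of $-\triangleleft b$ on $Q$, and self-distributivity from the third $Q$-family axiom in the first coordinate together with self-distributivity of $\triangleleft$ in the second. The only cosmetic difference is that the paper phrases the second axiom by exhibiting the unique preimage rather than by factoring $S_{(y,b)}$ as a product of bijections.
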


\begin{proof}
The first axiom of a quandle follows from the equalities
\[ (x,a)*(x,a)=(x*^ax,a\triangleleft a)=(x,a). \]
For any $(x,a),(y,b)\in X\times Q$, there is a unique
$(z,c)\in X\times Q$ such that $x=z*^by$ and $a=c\triangleleft b$.
By the equalities $(x,a)=(z*^by,c\triangleleft b)=(z,c)*(y,b)$, we have
the second axiom of a quandle.
The third axiom of a quandle follows from
\begin{align*}
((x,a)*(y,b))*(z,c)
&=((x*^by)*^cz,(a\triangleleft b)\triangleleft c) \\
&=((x*^cz)*^{b\triangleleft c}(y*^cz),(a\triangleleft c)\triangleleft(b\triangleleft c)) \\
&=((x,a)*(z,c))*((y,b)*(z,c)).
\end{align*}
\end{proof}

Conversely, we can prove the following.
Let $\triangleleft$ be a binary operation on a non-empty set $Q$.
Let $*^a$ be a binary operation on a non-empty set $X$ for $a\in Q$.
We define a binary operation
$*:(X\times Q)\times(X\times Q)\to X\times Q$ by
\[ (x,a)*(y,b)=(x*^by,a\triangleleft b). \]
If $(X\times Q,*)$ is a quandle (resp.~rack), then $(Q,\triangleleft)$
is a quandle (resp.~rack) and $(X,\{*^a\}_{a\in Q})$ is a $Q$-family of
quandles (resp.~racks).

\begin{proof}[Proof of Proposition~\ref{prop:associated_quandle}]
\begin{itemize}
\item[(1)]
The first and third axioms of a quandle are easily checked.
The second axiom of a quandle follows from the equalities
\[ (x*^gy)*^{g^{-1}}y=(x*^{g^{-1}}y)*^gy=x. \]
Then $(X,*^g)$ is a quandle.
\item[(2)]
Let $(G,\triangleleft)$ be the conjugation quandle.
By Lemma~\ref{lem:Q-family}, $(X\times G,*)$ is a quandle.
\end{itemize}
\end{proof}

The following proposition gives us many examples for a $G$-family of
quandles.

\begin{proposition} \label{prop:examples_G-family}
\begin{itemize}
\item[(1)]
Let $(X,*)$ be a quandle.
Let $S_x:X\to X$ be the bijection defined by $S_x(y)=y*x$.
Let $m$ be a positive integer such that $S_x^m=\mathrm{id}_X$ for any
$x\in X$ if such an integer exists.
We define the binary operation $*^i:X\times X\to X$ by $x*^iy=S_y^i(x)$.
Then $X$ is a $\mathbb{Z}$-family of quandles and a
$\mathbb{Z}_m$-family of quandles, where
$\mathbb{Z}_m=\mathbb{Z}/m\mathbb{Z}$.
\item[(2)]
Let $R$ be a ring, and $G$ a group with identity element $e$.
Let $X$ be a right $R[G]$-module, where $R[G]$ is the group ring of $G$
over $R$.
We define the binary operation $*^g:X\times X\to X$ by
$x*^gy=xg+y(e-g)$.
Then $X$ is a $G$-family of quandles.
\end{itemize}
\end{proposition}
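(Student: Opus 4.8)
The plan is to verify, in each case, the three defining axioms of a $G$-family of quandles directly, taking $G=\mathbb{Z}$ or $G=\mathbb{Z}_m$ in (1) and $G$ the given group in (2). For (1) it is convenient to first rewrite the quandle axioms for $(X,*)$ in terms of the bijections $S_x$: idempotency $x*x=x$ becomes $S_x(x)=x$, and the self-distributivity $(x*y)*z=(x*z)*(y*z)$ becomes the operator identity $S_zS_y=S_{S_z(y)}S_z$, equivalently $S_zS_yS_z^{-1}=S_{S_z(y)}$.

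With the definition $x*^iy=S_y^i(x)$, the first axiom $x*^ix=S_x^i(x)=x$ follows by iterating $S_x(x)=x$, and the second axiom is immediate from $(x*^iy)*^jy=S_y^jS_y^i(x)=S_y^{i+j}(x)=x*^{i+j}y$ together with $x*^0y=x$, so that $0$ serves as the identity. Since $\mathbb{Z}$ (and $\mathbb{Z}_m$) is abelian we have $j^{-1}ij=i$, so the third axiom reduces to the operator identity $S_z^jS_y^i=S_{S_z^j(y)}^iS_z^j$.

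This identity is the crux of (1). I would derive it from the conjugation relation above: iterating $S_zS_yS_z^{-1}=S_{S_z(y)}$ gives $S_z^jS_yS_z^{-j}=S_{S_z^j(y)}$ by induction on $j$, and since conjugation by $S_z^j$ is an automorphism, raising to the $i$-th power yields $S_z^jS_y^iS_z^{-j}=S_{S_z^j(y)}^i$, which is exactly the required equality. For the $\mathbb{Z}_m$ statement the only additional point is well-definedness: the hypothesis $S_y^m=\mathrm{id}_X$ forces $*^i$ to depend only on $i\bmod m$, so $\{*^i\}_{i\in\mathbb{Z}_m}$ is genuinely indexed by $\mathbb{Z}_m$, after which the same three computations apply verbatim.

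For (2) the argument is a direct expansion in the right $R[G]$-module $X$. Idempotency is $x*^gx=xg+x(e-g)=x$; the second axiom follows by expanding $(x*^gy)*^hy=(xg+y(e-g))h+y(e-h)$ and collecting the $y$-terms into $y(e-gh)$, while $x*^ey=x$ is clear. The third axiom is the only real computation: expanding $(x*^gy)*^hz$ gives $xgh+y(h-gh)+z(e-h)$, and expanding $(x*^hz)*^{h^{-1}gh}(y*^hz)$ by bilinearity reproduces the same $x$- and $y$-coefficients, while the two $z$-contributions share the common factor $z(e-h)$ whose remaining right factors sum to $(h^{-1}gh)+(e-h^{-1}gh)=e$. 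Thus both sides equal $xgh+y(h-gh)+z(e-h)$, and I expect this bookkeeping of the $z$-terms to be the only place that requires care.
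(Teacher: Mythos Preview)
Your argument is correct and follows the same route as the paper's proof: direct verification of the axioms, with the third axiom in (1) handled via the conjugation identity $S_zS_yS_z^{-1}=S_{S_z(y)}$ (the paper merely says ``by induction,'' which unpacks to exactly your iteration of this identity), and part (2) handled by the same straightforward expansion in the module. Your treatment is in fact more explicit than the paper's, particularly in isolating the well-definedness over $\mathbb{Z}_m$ and in tracking the $z$-terms in (2).
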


\begin{proof}
\begin{itemize}
\item[(1)]
We verify the axioms of a $G$-family of quandles.
\begin{align*}
x*^0y&=S_y^0(x)=\mathrm{id}_X(x)=x, \\
x*^ix&=S_x^i(x)=x, \\
(x*^iy)*^jy&=S_y^j(S_y^i(x))=S_y^{i+j}(x)=x*^{i+j}y.
\end{align*}
For the last axiom of a $G$-family of quandles, we can prove
\[ (x*^jz)*^i(y*^jz)=(x*^iy)*^jz \]
by induction.
\item[(2)]
We verify the axioms of a $G$-family of quandles.
\begin{align*}
x*^ey&=xe+y(e-e)=x, \\
x*^gx&=xg+x(e-g)=x, \\
(x*^gy)*^hy
&=(xg+y-yg)h+y-yh
=x*^{gh}y,
\end{align*}
\begin{align*}
&(x*^hz)*^{h^{-1}gh}(y*^hz) \\
&=(xh+z-zh)h^{-1}gh+(yh+z-zh)-(yh+z-zh)h^{-1}gh \\
&=(xg+y-yg)h+z-zh \\
&=(x*^gy)*^hz.
\end{align*}
\end{itemize}
\end{proof}

\section{Handlebody-links and $X$-colorings}
\label{sec:coloring}

A \textit{handlebody-link} is a disjoint union of handlebodies embedded
in the $3$-sphere $S^3$.
Two handlebody-links are \textit{equivalent} if there is an
orientation-preserving self-homeomorphism of $S^3$ which sends one to
the other.
A \textit{spatial graph} is a finite graph embedded in $S^3$.
Two spatial graphs are \textit{equivalent} if there is an
orientation-preserving self-homeomorphism of $S^3$ which sends one to
the other.
When a handlebody-link $H$ is a regular neighborhood of a spatial graph
$K$, we say that $K$ \textit{represents} $H$, or $H$
\textit{is represented by} $K$.
In this paper, a trivalent graph may contain circle components.
Then any handlebody-link can be represented by some spatial trivalent
graph.
A \textit{diagram} of a handlebody-link is a diagram of a spatial
trivalent graph which represents the handlebody-link.

An \textit{IH-move} is a local spatial move on spatial trivalent graphs
as described in Figure~\ref{fig:IH-move}, where the replacement is
applied in a $3$-ball embedded in $S^3$.
Then we have the following theorem.

\begin{figure}
\mbox{}\hfill
\begin{picture}(110,40)
\put(20,10){\line(2,-1){20}}
\put(20,10){\line(-2,-1){20}}
\put(20,10){\line(0,1){20}}
\put(20,30){\line(2,1){20}}
\put(20,30){\line(-2,1){20}}
\put(55,20){\makebox(0,0){$\leftrightarrow$}}
\put(80,20){\line(-1,2){10}}
\put(80,20){\line(-1,-2){10}}
\put(80,20){\line(1,0){20}}
\put(100,20){\line(1,2){10}}
\put(100,20){\line(1,-2){10}}
\end{picture}
\hfill\mbox{}
\caption{}
\label{fig:IH-move}
\end{figure}

\begin{theorem}[\cite{Ishii08}] \label{thm:Rmove}
For spatial trivalent graphs $K_1$ and $K_2$, the following are
equivalent.
\begin{itemize}
\item $K_1$ and $K_2$ represent an equivalent handlebody-link.
\item $K_1$ and $K_2$ are related by a finite sequence of IH-moves.
\item Diagrams of $K_1$ and $K_2$ are related by a finite sequence of
the moves depicted in Figure~\ref{fig:Rmove}.
\end{itemize}
\end{theorem}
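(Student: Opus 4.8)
The plan is to prove the three conditions equivalent by establishing the topological equivalence $(1)\Leftrightarrow(2)$ and the diagrammatic equivalence $(2)\Leftrightarrow(3)$ separately, after which the three-way equivalence is immediate.

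I would begin with the easy implication $(2)\Rightarrow(1)$. An IH-move is performed inside a $3$-ball $B$, and the two local pictures of Figure~\ref{fig:IH-move} are both trivalent spines of one and the same properly embedded genus-two handlebody in $B$, meeting $\partial B$ in the same four arcs. A direct isotopy rel $\partial B$ carries one onto the other, so the regular neighborhood is unchanged up to equivalence; hence a single IH-move preserves the equivalence class of the handlebody-link, and by induction so does any finite sequence, giving $(2)\Rightarrow(1)$.

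The substantial direction is $(1)\Rightarrow(2)$. Given an orientation-preserving self-homeomorphism $\varphi$ of $S^3$ with $\varphi(N(K_1))=N(K_2)$, the graphs $\varphi(K_1)$ and $K_2$ are two trivalent spines of the \emph{same} handlebody-link $H=N(K_2)$. The crux is the purely topological lemma that \emph{any two trivalent spines of a handlebody are connected by a finite sequence of IH-moves and ambient isotopies supported inside the handlebody}. To prove it I would use the duality between a trivalent spine and a system of meridian disks: the disks dual to the edges cut $H$ into balls, one at each vertex, with each ball meeting exactly three disks, and the IH-move is precisely the elementary reattachment (a handle slide) relating two such ball-and-disk decompositions across a single inner disk. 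Since any two complete meridian-disk systems of a handlebody are related by handle slides, the corresponding spines are related by IH-moves; transporting the resulting sequence back by $\varphi^{-1}$ then connects $K_1$ and $K_2$.

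For $(2)\Leftrightarrow(3)$ I would invoke the Reidemeister theorem for spatial trivalent graphs: two diagrams present ambient-isotopic spatial trivalent graphs if and only if they are related by the classical moves R1--R3 together with the moves that slide and rotate an edge past a trivalent vertex. Adjoining the diagrammatic form of the IH-move---one of the moves in Figure~\ref{fig:Rmove}---to both sides of this correspondence converts ambient isotopy into IH-equivalence on the graph side while producing exactly the move set of Figure~\ref{fig:Rmove} on the diagram side, which establishes $(2)\Leftrightarrow(3)$. The main obstacle is the spine lemma feeding $(1)\Rightarrow(2)$: one must show that the difference between two arbitrary trivalent spines of a handlebody always factors through the single local IH-modification, and this rests on the nontrivial fact that meridian-disk systems are unique up to handle slides; the remaining implications are comparatively routine once this lemma is in hand.
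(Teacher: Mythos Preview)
The paper does not prove this theorem at all: it is stated with the citation \cite{Ishii08} and no argument is supplied, so there is no ``paper's own proof'' to compare against. Your outline is a reasonable reconstruction of how the result in \cite{Ishii08} goes---the spine-versus-disk-system argument for $(1)\Rightarrow(2)$ and the Reidemeister-type diagrammatics for $(2)\Leftrightarrow(3)$ are indeed the standard ingredients---but in the context of the present paper the theorem is simply imported as a black box from the earlier reference.

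One small point worth tightening in your sketch: in $(1)\Rightarrow(2)$ you obtain IH-moves connecting $\varphi(K_1)$ to $K_2$ inside $H$, and then appeal to $\varphi$ to relate $K_1$ and $\varphi(K_1)$. This last step is an ambient isotopy of $S^3$ (since orientation-preserving homeomorphisms of $S^3$ are isotopic to the identity), not an IH-move, so condition~(2) must be read as ``IH-moves up to ambient isotopy''---which is the intended reading, but you should say so explicitly.
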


\begin{figure}
\mbox{}\hfill
\begin{picture}(100,40)
 \qbezier(0,0)(0,12)(5,20)
 \qbezier(5,20)(10,29)(14,29)
 \qbezier(14,11)(10,11)(7,16)
 \qbezier(3,24)(0,30)(0,40)
 \qbezier(14,11)(20,11)(20,20)
 \qbezier(14,29)(20,29)(20,20)
 \put(35,20){\makebox(0,0){$\leftrightarrow$}}
 \qbezier(50,0)(50,20)(50,40)
 \put(65,20){\makebox(0,0){$\leftrightarrow$}}
 \qbezier(80,0)(80,10)(83,16)
 \qbezier(87,24)(90,29)(94,29)
 \qbezier(94,11)(90,11)(85,20)
 \qbezier(85,20)(80,28)(80,40)
 \qbezier(94,11)(100,11)(100,20)
 \qbezier(94,29)(100,29)(100,20)
\end{picture}
\hfill\hfill
\begin{picture}(70,40)
 \qbezier(0,0)(0,6)(10,10)
 \qbezier(10,10)(20,14)(20,20)
 \qbezier(20,0)(20,5)(14,8)
 \qbezier(6,12)(0,15)(0,20)
 \qbezier(0,20)(0,25)(6,28)
 \qbezier(14,32)(20,35)(20,40)
 \qbezier(20,20)(20,26)(10,30)
 \qbezier(10,30)(0,34)(0,40)
 \put(35,20){\makebox(0,0){$\leftrightarrow$}}
 \qbezier(50,0)(50,20)(50,40)
 \qbezier(70,0)(70,20)(70,40)
\end{picture}
\hfill\hfill
\begin{picture}(80,40)
 \qbezier(0,0)(0,4)(6.66,6.67)
 \qbezier(6.66,6.67)(13.33,9.33)(13.33,13.33)
 \qbezier(13.33,0)(13.33,3.33)(9.33,5.33)
 \qbezier(4,8)(0,10)(0,13.33)
 \qbezier(26.66,0)(26.66,6.67)(26.66,13.33)
 \qbezier(0,13.33)(0,20)(0,26.67)
 \qbezier(13.33,13.33)(13.33,17.33)(20,20)
 \qbezier(20,20)(26.66,22.67)(26.66,26.67)
 \qbezier(26.66,13.33)(26.66,16.67)(22.66,18.67)
 \qbezier(17.33,21.33)(13.33,23.33)(13.33,26.67)
 \qbezier(0,26.67)(0,30.67)(6.66,33.33)
 \qbezier(6.66,33.33)(13.33,36)(13.33,40)
 \qbezier(13.33,26.67)(13.33,30)(9.33,32)
 \qbezier(4,34.67)(0,36.67)(0,40)
 \qbezier(26.66,26.67)(26.66,33.33)(26.66,40)
 \put(40,20){\makebox(0,0){$\leftrightarrow$}}
 \qbezier(53.33,0)(53.33,6.67)(53.33,13.33)
 \qbezier(66.66,0)(66.66,4)(73.33,6.67)
 \qbezier(73.33,6.67)(80,9.33)(80,13.33)
 \qbezier(80,0)(80,3.33)(76,5.33)
 \qbezier(70.66,8)(66.66,10)(66.66,13.33)
 \qbezier(53.33,13.33)(53.33,17.33)(60,20)
 \qbezier(60,20)(66.66,22.67)(66.66,26.67)
 \qbezier(66.66,13.33)(66.66,16.67)(62.66,18.67)
 \qbezier(57.33,21.33)(53.33,23.33)(53.33,26.67)
 \qbezier(80,13.33)(80,20)(80,26.67)
 \qbezier(53.33,26.67)(53.33,33.33)(53.33,40)
 \qbezier(66.66,26.67)(66.66,30.67)(73.33,33.33)
 \qbezier(73.33,33.33)(80,36)(80,40)
 \qbezier(80,26.67)(80,30)(76,32)
 \qbezier(70.66,34.67)(66.66,36.67)(66.66,40)
\end{picture}
\hfill\mbox{}\vspace{15pt}\\
\begin{picture}(115,40)
 \qbezier(0,0)(0,12)(5,20)
 \qbezier(5,20)(10,29)(12,29)
 \qbezier(12,11)(10,11)(7,16)
 \qbezier(3,24)(0,30)(0,40)
 \qbezier(12,11)(15,11)(15,20)
 \qbezier(12,29)(15,29)(15,20)
 \qbezier(15,20)(20,20)(25,20)
 \put(37.5,20){\makebox(0,0){$\leftrightarrow$}}
 \qbezier(50,0)(52.5,10)(55,20)
 \qbezier(50,40)(52.5,30)(55,20)
 \qbezier(55,20)(60,20)(65,20)
 \put(77.5,20){\makebox(0,0){$\leftrightarrow$}}
 \qbezier(90,40)(90,28)(95,20)
 \qbezier(95,20)(100,11)(102,11)
 \qbezier(102,29)(100,29)(97,24)
 \qbezier(93,16)(90,10)(90,0)
 \qbezier(102,29)(105,29)(105,20)
 \qbezier(102,11)(105,11)(105,20)
 \qbezier(105,20)(110,20)(115,20)
\end{picture}
\hfill\hfill
\begin{picture}(110,40)
 \qbezier(10,0)(10,20)(10,40)
 \qbezier(0,20)(0,27)(8,33)\qbezier(12,35)(16,37.5)(20,40)
 \qbezier(0,20)(4,20)(8,20)\qbezier(12,20)(16,20)(20,20)
 \qbezier(0,20)(0,13)(8,7)\qbezier(12,5)(16,2.5)(20,0)
 \put(32.5,20){\makebox(0,0){$\leftrightarrow$}}
 \qbezier(45,0)(45,20)(45,40)
 \qbezier(55,20)(55,30)(65,40)
 \qbezier(55,20)(55,20)(65,20)
 \qbezier(55,20)(55,10)(65,0)
 \put(77.5,20){\makebox(0,0){$\leftrightarrow$}}
 \qbezier(100,0)(100,2)(100,4)\qbezier(100,8)(100,15)(100,18)
 \qbezier(100,22)(100,29)(100,32)\qbezier(100,36)(100,38)(100,40)
 \qbezier(90,20)(90,30)(110,40)
 \qbezier(90,20)(100,20)(110,20)
 \qbezier(90,20)(90,10)(110,0)
\end{picture}
\hfill\hfill
\begin{picture}(60,40)
 \qbezier(0,40)(5,35)(10,30)
 \qbezier(20,40)(15,35)(10,30)
 \qbezier(10,30)(10,20)(10,10)
 \qbezier(10,10)(5,5)(0,0)
 \qbezier(10,10)(15,5)(20,0)
 \put(30,20){\makebox(0,0){$\leftrightarrow$}}
 \qbezier(40,40)(42.5,30)(45,20)
 \qbezier(40,0)(42.5,10)(45,20)
 \qbezier(45,20)(50,20)(55,20)
 \qbezier(55,20)(57.5,30)(60,40)
 \qbezier(55,20)(57.5,10)(60,0)
\end{picture}\vspace*{5pt}
\caption{}
\label{fig:Rmove}
\end{figure}

Let $D$ be a diagram of a handlebody-link $H$.
We set an orientation for each edge in $D$.
Then $D$ is a diagram of an oriented spatial trivalent graph $K$.
We may represent an orientation of an edge by a normal orientation,
which is obtained by rotating a usual orientation counterclockwise by
$\pi/2$ on the diagram.
We denote by $\mathcal{A}(D)$ the set of arcs of $D$, where an arc is a
piece of a curve each of whose endpoints is an undercrossing or a
vertex.
For an arc $\alpha$ incident to a vertex $\omega$, we define
$\epsilon(\alpha;\omega)\in\{1,-1\}$ by
\[ \epsilon(\alpha;\omega)=\begin{cases}
1 & \text{if the orientation of $\alpha$ points to $\omega$,} \\
-1 & \text{otherwise.}
\end{cases} \]
Let $X$ be a $G$-family of quandles, and $Q$ the associated quandle of
$X$.
Let $p_X$ (resp.~$p_G$) be the projection from $Q$ to $X$ (resp.~$G$).
An \textit{$X$-coloring} of $D$ is a map $C:\mathcal{A}(D)\to Q$
satisfying the following conditions at each crossing $\chi$ and each
vertex $\omega$ of $D$ (see Figure~\ref{fig:X-coloring}).
\begin{itemize}
\item
Let $\chi_1,\chi_2$ and $\chi_3$ be respectively the under-arcs and the
over-arc at a crossing $\chi$ such that the normal orientation of
$\chi_3$ points from $\chi_1$ to $\chi_2$.
Then
\[ C(\chi_2)=C(\chi_1)*C(\chi_3). \]
\item
Let $\omega_1,\omega_2,\omega_3$ be the arcs incident to a vertex
$\omega$ arranged clockwise around $\omega$.
Then
\begin{align*}
&(p_X\circ C)(\omega_1)=(p_X\circ C)(\omega_2)=(p_X\circ C)(\omega_3), \\
&(p_G\circ C)(\omega_1)^{\epsilon(\omega_1;\omega)}
(p_G\circ C)(\omega_2)^{\epsilon(\omega_2;\omega)}
(p_G\circ C)(\omega_3)^{\epsilon(\omega_3;\omega)}=e.
\end{align*}
\end{itemize}
We denote by $\operatorname{Col}_X(D)$ the set of $X$-colorings of $D$.
We call $C(\alpha)$ the \textit{color} of $\alpha$.
For two diagrams $D$ and $E$ which locally differ, we denote by
$\mathcal{A}(D,E)$ the set of arcs that $D$ and $E$ share.

\begin{figure}
\mbox{}\hfill
\begin{picture}(110,90)
\put(15,45){\line(1,0){25}}
\put(50,45){\line(1,0){25}}
\put(45,15){\line(0,1){60}}
\put(46,25){\makebox(0,0){$\rightarrow$}}
\put(45,5){\makebox(0,0){$q_3$}}
\put(5,45){\makebox(0,0){$q_1$}}
\put(95,45){\makebox(0,0){$q_1*q_3$}}
\end{picture}
\hfill
\begin{picture}(86,90)
\put(15,15){\line(1,1){28}}
\put(71,15){\line(-1,1){28}}
\put(43,75){\line(0,-1){32}}
\put(26,25){\makebox(0,0){$\searrow$}}
\put(62,25){\makebox(0,0){$\nearrow$}}
\put(44,65){\makebox(0,0){$\rightarrow$}}
\put(15,5){\makebox(0,0){$(x,g)$}}
\put(71,5){\makebox(0,0){$(x,h)$}}
\put(45,85){\makebox(0,0){$(x,gh)$}}
\end{picture}
\hfill\mbox{}\vspace{5mm}\\
\mbox{}\hfill
\begin{picture}(86,90)
\put(15,15){\line(1,1){28}}
\put(71,15){\line(-1,1){28}}
\put(43,75){\line(0,-1){32}}
\put(26,25){\makebox(0,0){$\searrow$}}
\put(62,25){\makebox(0,0){$\nearrow$}}
\put(42,65){\makebox(0,0){$\leftarrow$}}
\put(15,5){\makebox(0,0){$(x,g)$}}
\put(71,5){\makebox(0,0){$(x,h)$}}
\put(45,85){\makebox(0,0){$(x,h^{-1}g^{-1})$}}
\end{picture}
\hfill
\begin{picture}(86,90)
\put(15,15){\line(1,1){28}}
\put(71,15){\line(-1,1){28}}
\put(43,75){\line(0,-1){32}}
\put(24,25){\makebox(0,0){$\nwarrow$}}
\put(60,25){\makebox(0,0){$\swarrow$}}
\put(42,65){\makebox(0,0){$\leftarrow$}}
\put(15,5){\makebox(0,0){$(x,g)$}}
\put(71,5){\makebox(0,0){$(x,h)$}}
\put(45,85){\makebox(0,0){$(x,hg)$}}
\end{picture}
\hfill
\begin{picture}(86,90)
\put(15,15){\line(1,1){28}}
\put(71,15){\line(-1,1){28}}
\put(43,75){\line(0,-1){32}}
\put(24,25){\makebox(0,0){$\nwarrow$}}
\put(60,25){\makebox(0,0){$\swarrow$}}
\put(44,65){\makebox(0,0){$\rightarrow$}}
\put(15,5){\makebox(0,0){$(x,g)$}}
\put(71,5){\makebox(0,0){$(x,h)$}}
\put(45,85){\makebox(0,0){$(x,g^{-1}h^{-1})$}}
\end{picture}
\hfill\mbox{}
\caption{}
\label{fig:X-coloring}
\end{figure}

\begin{lemma} \label{lem:X-coloring}
Let $X$ be a $G$-family of quandles.
Let $D$ be a diagram of an oriented spatial trivalent graph.
Let $E$ be a diagram obtained by applying one of the R1--R6 moves to the
diagram $D$ once, where we choose orientations for $E$ which agree with
those for $D$ on $\mathcal{A}(D,E)$.
For $C\in\operatorname{Col}_X(D)$, there is a unique $X$-coloring
$C_{D,E}\in\operatorname{Col}_X(E)$ such that
$C|_{\mathcal{A}(D,E)}=C_{D,E}|_{\mathcal{A}(D,E)}$.
\end{lemma}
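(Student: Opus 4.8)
The plan is to reduce the statement to a local analysis and then treat the six moves case by case. Since $D$ and $E$ coincide outside a small ball $B$ in which the move takes place, any map $C'\colon\mathcal{A}(E)\to Q$ with $C'|_{\mathcal{A}(D,E)}=C|_{\mathcal{A}(D,E)}$ automatically satisfies the crossing and vertex conditions at every crossing and vertex of $E$ lying outside $B$, because those conditions involve only colors of arcs in $\mathcal{A}(D,E)$ and already hold for $C$. Thus it suffices, for each of R1--R6, to show that the colors prescribed by $C$ on the arcs of $\mathcal{A}(D,E)$ entering $B$ extend in exactly one way to a coloring of the arcs of $E$ inside $B$ satisfying the local conditions. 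As each move in Figure~\ref{fig:Rmove} is reversible, applying the same argument to the reverse move shows that $C\mapsto C_{D,E}$ is in fact a bijection $\operatorname{Col}_X(D)\to\operatorname{Col}_X(E)$, but existence and uniqueness are all that is needed.

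For the moves R1--R3, which only rearrange crossings and leave every vertex untouched, the conditions to verify are the crossing conditions, governed solely by the operation $*$ on the associated quandle $Q=(X\times G,*)$. By Proposition~\ref{prop:associated_quandle}(2), $(X\times G,*)$ is a quandle, and the three quandle axioms are exactly what make the local recoloring exist and be unique: idempotency $q*q=q$ handles the kink move R1, the bijectivity of the right translation $q'\mapsto q'*q$ handles the bigon move R2, and right self-distributivity $(q*r)*s=(q*s)*(r*s)$ handles the triangle move R3. The various orientation conventions displayed in Figure~\ref{fig:X-coloring} are all accommodated by the quandle axioms (equivalently by the good involution $f$), so no case among R1--R3 poses any difficulty.

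The moves R4--R6 involve vertices, and here the specific shape of the associated quandle enters through the two identities $p_X((x,g)*(y,h))=x*^{h}y$ and $p_G((x,g)*(y,h))=h^{-1}gh$. The first shows that the $X$-component of $q*r$ depends on $q$ only through $p_X(q)$, so passing a strand of color $r=(y,h)$ across a trivalent vertex sends the three incident colors $q_1,q_2,q_3$ to $q_1*r,q_2*r,q_3*r$ while keeping their $X$-components equal (each becomes $x*^{h}y$); the second shows that their $G$-components are simultaneously conjugated by $h$, so the vertex relation $\prod_i p_G(q_i)^{\epsilon_i}=e$ is preserved because $\prod_i(h^{-1}p_G(q_i)h)^{\epsilon_i}=h^{-1}(\prod_i p_G(q_i)^{\epsilon_i})h=e$. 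Together with the invertibility of $q'\mapsto q'*r$, this settles the strand-over-vertex move R5; the dual sub-case in which the strand passes under leaves the three edge colors unchanged and only recolors the strand, and is handled by the same invertibility. The move R4 is the R1 computation $q*q=q$ carried out on an edge incident to a vertex: the color returns to itself across the kink, so the vertex condition is untouched and the recoloring is forced.

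The main obstacle is R6, the IH-move, where the four outer arcs are fixed by $C$ and the single connecting edge must be recolored. I would first note that all four outer arcs, and hence the connecting edge, must carry the common $X$-component $x$, so the $X$-part causes no trouble. The content lies in the $G$-part: the vertex condition at one endpoint of the connecting edge determines its $p_G$-value uniquely, and I must then check that the vertex condition at the other endpoint holds automatically. This reduces to verifying that the two vertex equations of the $H$-picture and the two vertex equations of the $I$-picture impose the same relation on the four outer $G$-colors, both collapsing to a single product-equals-$e$ condition on the boundary. The delicate point, and the one most prone to error, is the bookkeeping of the clockwise orderings around the two vertices and of the signs $\epsilon(\alpha;\omega)$ induced by the chosen orientations, since these determine the exact word in the $p_G$-values that must coincide for the two pictures. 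Once the signs are matched correctly, colorability transfers from $D$ to $E$ with the connecting edge uniquely determined, completing the case analysis.
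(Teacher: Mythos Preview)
Your approach is the same as the paper's: reduce to a local analysis and treat R1--R6 case by case, invoking the quandle axioms of the associated quandle for R1--R3 and the $G$-family structure for the vertex moves. Where the paper simply lists the key identities
\[
a*^g a=a,\qquad (a*^g b)*^{g^{-1}}b=a,\qquad (a*^g b)*^h c=(a*^h c)*^{h^{-1}gh}(b*^h c),
\]
and for R5 the identity $((b*^g a)*^h a)*^{(gh)^{-1}}a=b$, you unpack the same facts in more conceptual language via Proposition~\ref{prop:associated_quandle}. Your discussion of R6 is more detailed than the paper's one-line ``only the coloring condition,'' and is correct.

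There is one genuine gap in your R5 analysis. You handle the sub-case where the strand passes \emph{over} the vertex correctly (the three edge colors get simultaneously acted on by $*r$, preserving both the common $X$-component and the vertex product in $G$). But the dual sub-case, where the strand passes \emph{under} the three edges of the vertex, is \emph{not} ``handled by the same invertibility.'' Invertibility of $q'\mapsto q'*r$ gives uniqueness of the intermediate strand colors, but not existence: you must check that after the strand is acted on successively by the three edge colors $(a,g_1),(a,g_2),(a,g_3)$ it returns to its original color. This requires the second $G$-family axiom $x*^{gh}y=(x*^g y)*^h y$ and $x*^e y=x$, together with the vertex relation $g_1^{\epsilon_1}g_2^{\epsilon_2}g_3^{\epsilon_3}=e$, which combine to give exactly the paper's identity $((b*^g a)*^h a)*^{(gh)^{-1}}a=b*^e a=b$. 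Once you insert that computation, your argument is complete.
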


\begin{proof}
The color of an edge in $\mathcal{A}(E)-\mathcal{A}(D,E)$ is uniquely
determined by the colors of edges in $\mathcal{A}(D,E)$, since we have
\[ a*^ga=a \]
for the R1, R4 moves, and
\[ (a*^gb)*^{g^{-1}}b=a*^eb=a \]
for the R2 move, and
\[ (a*^gb)*^hc =(a*^hc)*^{h^{-1}gh}(b*^hc) \]
for the R3 move, and
\[ ((b*^ga)*^ha)*^{(gh)^{-1}}a=a*^eb=b \]
for the R5 move, and only the coloring condition for the R6-move.
\end{proof}

Let $X$ be a $G$-family of quandles.
An \textit{$X$-set} is a non-empty set $Y$ with a family of maps
$*^g:Y\times X\to Y$ satisfying the following axioms, where we note that
we use the same symbol $*^g$ as the binary operation of the $G$-family
of quandles.
\begin{itemize}
\item
For any $y\in Y$, $x\in X$, and any $g,h\in G$,
\[ y*^{gh}x=(y*^gx)*^hx\text{ and }y*^ex=y. \]
\item
For any $y\in Y$, $x_1,x_2\in X$, and any $g,h\in G$,
\[ (y*^gx_1)*^hx_2=(y*^hx_2)*^{h^{-1}gh}(x_1*^hx_2). \]
\end{itemize}
Any $G$-family of quandles $(X,\{*^g\}_{g\in G})$ itself is an $X$-set
with its binary operations.
Any singleton set $\{y\}$ is also an $X$-set with the maps $*^g$ defined
by $y*^gx=y$ for $x\in X$ and $g\in G$, which is a trivial $X$-set.

Let $D$ be a diagram of an oriented spatial trivalent graph.
We denote by $\mathcal{R}(D)$ the set of complementary regions of $D$.
Let $X$ be a $G$-family of quandles, and $Y$ an $X$-set.
Let $Q$ be the associated quandle of $X$.
An \textit{$X_Y$-coloring} of $D$ is a map
$C:\mathcal{A}(D)\cup\mathcal{R}(D)\to Q\cup Y$ satisfying the following
conditions.
\begin{itemize}
\item
$C(\mathcal{A}(D))\subset Q$, $C(\mathcal{R}(D))\subset Y$.
\item
The restriction $C|_{\mathcal{A}(D)}$ of $C$ on $\mathcal{A}(D)$ is an
$X$-coloring of $D$.
\item
For any arc $\alpha\in\mathcal{A}(D)$, we have
\[ C(\alpha_1)*C(\alpha)=C(\alpha_2), \]
where $\alpha_1,\alpha_2$ are the regions facing the arc $\alpha$ so
that the normal orientation of $\alpha$ points from $\alpha_1$ to
$\alpha_2$ (see Figure~\ref{fig:XY-coloring}).
\end{itemize}
We denote by $\operatorname{Col}_X(D)_Y$ the set of $X_Y$-colorings of
$D$.

\begin{figure}
\mbox{}\hfill
\begin{picture}(90,90)
\put(10,35){\framebox(20,20){$y_1$}}
\put(60,35){\framebox(35,20){$y_1*q$}}
\put(45,15){\line(0,1){60}}
\put(46,25){\makebox(0,0){$\rightarrow$}}
\put(45,5){\makebox(0,0){$q$}}
\end{picture}
\hfill\mbox{}
\caption{}
\label{fig:XY-coloring}
\end{figure}

For two diagrams $D$ and $E$ which locally differ, we denote by
$\mathcal{R}(D,E)$ the set of regions that $D$ and $E$ share.
Since colors of regions are uniquely determined by those of arcs and one
region, Lemma~\ref{lem:X-coloring} implies the following lemma.

\begin{lemma} \label{lem:XY-coloring}
Let $X$ be a $G$-family of quandles, $Y$ an $X$-set.
Let $D$ be a diagram of an oriented spatial trivalent graph.
Let $E$ be a diagram obtained by applying one of the R1--R6 moves to the
diagram $D$ once, where we choose orientations for $E$ which agree with
those for $D$ on $\mathcal{A}(D,E)$.
For $C\in\operatorname{Col}_X(D)_Y$, there is a unique $X_Y$-coloring
$C_{D,E}\in\operatorname{Col}_X(E)_Y$ such that
$C|_{\mathcal{A}(D,E)}=C_{D,E}|_{\mathcal{A}(D,E)}$ and
$C|_{\mathcal{R}(D,E)}=C_{D,E}|_{\mathcal{R}(D,E)}$.
\end{lemma}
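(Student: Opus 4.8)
The plan is to split an $X_Y$-coloring into its arc part and its region part, handle the arc part with Lemma~\ref{lem:X-coloring}, and then exploit the fact that the region part is completely pinned down by the arc colors together with the color of a single region. So let $C\in\operatorname{Col}_X(D)_Y$ be given. Its restriction $C|_{\mathcal{A}(D)}$ is by definition an $X$-coloring of $D$, so Lemma~\ref{lem:X-coloring} furnishes a unique $X$-coloring $c'$ of $E$ with $c'|_{\mathcal{A}(D,E)}=C|_{\mathcal{A}(D,E)}$. Any $X_Y$-coloring $C_{D,E}$ as sought must restrict to $c'$ on $\mathcal{A}(E)$, which already determines the arc part uniquely; the remaining task is the region part.

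For the region part I would first make precise the fact quoted just before the statement: for a fixed $X$-coloring of a diagram, a region coloring obeying $C(\alpha_1)*C(\alpha)=C(\alpha_2)$ is determined by its value on one region. Indeed, crossing an arc colored $(x,g)$ applies the map $y\mapsto y*^gx$, which is a bijection with inverse $y\mapsto y*^{g^{-1}}x$ since the $X$-set axioms give $(y*^gx)*^{g^{-1}}x=y*^ex=y$; as any two regions are joined by a path meeting finitely many arcs, the color of one region propagates to every region, giving uniqueness. Existence of a consistent propagation is exactly what the two $X$-set axioms (at crossings) and the vertex condition $(p_G\circ C)(\omega_1)^{\epsilon(\omega_1;\omega)}(p_G\circ C)(\omega_2)^{\epsilon(\omega_2;\omega)}(p_G\circ C)(\omega_3)^{\epsilon(\omega_3;\omega)}=e$ (at vertices) guarantee, so a region coloring of $E$ extending $c'$ exists once a single value is chosen.

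Finally I would seed the propagation at a shared region. Because $D$ and $E$ agree outside a small disk, $\mathcal{R}(D,E)\neq\emptyset$; pick $r_0\in\mathcal{R}(D,E)$, set $C_{D,E}(r_0)=C(r_0)$, and extend by the propagation rule to obtain $C_{D,E}\in\operatorname{Col}_X(E)_Y$. Uniqueness is then immediate: two such colorings share the arc part $c'$ and the value at $r_0$, hence coincide everywhere by the previous paragraph. The step I expect to carry the real content is checking that $C_{D,E}$ agrees with $C$ on all of $\mathcal{R}(D,E)$, not only at $r_0$. Here I would use that every shared region can be reached from $r_0$ along a path lying in the common exterior of the move disk, which is connected in $S^2$, so the crossed arcs all lie in $\mathcal{A}(D,E)$ and carry the same colors in $C$ and in $C_{D,E}$; since the two colorings also agree at the seed $r_0$, the propagated values agree on every shared region, the only delicate case being regions abutting the disk, which match because $c'$ and $C$ already coincide on the arcs along its boundary.
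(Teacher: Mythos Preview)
Your argument is correct and follows exactly the approach the paper takes: the paper's entire proof is the single sentence preceding the lemma, ``Since colors of regions are uniquely determined by those of arcs and one region, Lemma~\ref{lem:X-coloring} implies the following lemma,'' and you have simply unpacked that sentence---reducing the arc part to Lemma~\ref{lem:X-coloring}, observing that region colors propagate bijectively across arcs, and checking that propagation from a shared seed region agrees with $C$ on all of $\mathcal{R}(D,E)$ because the move is local. Nothing to add.
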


\section{A homology}
\label{sec:homology}

Let $X$ be a $G$-family of quandles, and $Y$ an $X$-set.
Let $(Q,*)$ be the associated quandle of $X$.
Let $B_n(X)_Y$ be the free abelian group generated by the elements of
$Y\times Q^n$ if $n\geq0$, and let $B_n(X)_Y=0$ otherwise.
We put
\[ ((y,q_1,\ldots,q_i)*q,q_{i+1},\ldots,q_n)
:=(y*q,q_1*q,\ldots,q_i*q,q_{i+1},\ldots,q_n) \]
for $y\in Y$ and $q,q_1\ldots,q_n\in Q$.
We define a boundary homomorphism
$\partial_n:B_n(X)_Y\to B_{n-1}(X)_Y$ by
\begin{align*}
\partial_n(y,q_1,\ldots,q_n)
=&\sum_{i=1}^{n}(-1)^i(y,q_1,\ldots,q_{i-1},q_{i+1},\ldots,q_n) \\
&-\sum_{i=1}^{n}(-1)^i((y,q_1,\ldots,q_{i-1})*q_i,q_{i+1},\ldots,q_n)
\end{align*}
for $n>0$, and $\partial_n=0$ otherwise.
Then $B_*(X)_Y=(B_n(X)_Y,\partial_n)$ is a chain complex
(see~\cite{CarterJelsovskyKamadaLangfordSaito03,CarterJelsovskyKamadaSaito01,
FennRourkeSanderson95,FennRourkeSanderson07}).

Let $D_n(X)_Y$ be the subgroup of $B_n(X)_Y$ generated by the elements
of
\[ \bigcup_{i=1}^{n-1}\left\{
(y,q_1,\ldots,q_{i-1},(x,g),(x,h),q_{i+2},\ldots,q_n)
\,\left|\,
\begin{array}{@{}l@{}}
y\in Y,\,x\in X,\,g,h\in G \\ q_1,\ldots,q_n\in Q
\end{array}
\right.\right\} \]
and
\[ \bigcup_{i=1}^n\left\{\left.
\begin{array}{@{}l@{}}
(y,q_1,\ldots,q_{i-1},(x,gh),q_{i+1},\ldots,q_n) \\
-(y,q_1,\ldots,q_{i-1},(x,g),q_{i+1},\ldots,q_n) \\
-((y,q_1,\ldots,q_{i-1})*(x,g),(x,h),q_{i+1},\ldots,q_n)
\end{array}
\,\right|\,
\begin{array}{@{}l@{}}
y\in Y,\,x\in X, \\ g,h\in G, \\ q_1,\ldots, q_n\in Q
\end{array}
\right\}. \]
We remark that
\[ (y,q_1,\ldots,q_{i-1},(x,e),q_{i+1},\ldots,q_n) \]
and
\begin{align*}
&(y,q_1,\ldots,q_{i-1},(x,g),q_{i+1},\ldots,q_n) \\
&+((y,q_1,\ldots,q_{i-1})*(x,g),(x,g^{-1}),q_{i+1},\ldots,q_n)
\end{align*}
belong to $D_n(X)_Y$.

\begin{lemma} \label{lem:subcomplex}
For $n\in\mathbb{Z}$, we have $\partial_n(D_n(X)_Y)\subset D_{n-1}(X)_Y$.
Thus $D_*(X)_Y=(D_n(X)_Y,\partial_n)$ is a subcomplex of $B_*(X)_Y$.
\end{lemma}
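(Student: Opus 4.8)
The plan is to rewrite $\partial_n$ in terms of face maps and then verify the claim on each of the two families of generators of $D_n(X)_Y$ separately. For $1\le i\le n$ set
$d_i^0(y,q_1,\ldots,q_n)=(y,q_1,\ldots,q_{i-1},q_{i+1},\ldots,q_n)$ and
$d_i^1(y,q_1,\ldots,q_n)=((y,q_1,\ldots,q_{i-1})*q_i,q_{i+1},\ldots,q_n)$,
so that $\partial_n=\sum_{i=1}^n(-1)^i(d_i^0-d_i^1)$. Since $\partial_n$ is a homomorphism it is enough to show that $\partial_n$ carries each listed generator into $D_{n-1}(X)_Y$. Throughout I will use the formula $(x,g)*(x',b)=(x*^bx',b^{-1}gb)$ for the associated quandle together with the $G$-family axioms.

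First consider the adjacent generators $w=(y,\ldots,q_{i-1},(x,g),(x,h),q_{i+2},\ldots,q_n)$, and split $\partial_n w$ according to whether $j\notin\{i,i+1\}$ or $j\in\{i,i+1\}$. For $j<i$ the pair $(x,g),(x,h)$ is merely shifted, and for $j>i+1$ it is either left untouched (by $d_j^0$) or both of its entries are acted on by the same $*q_j$ (by $d_j^1$); writing $q_j=(x',b)$, the latter changes only the second coordinates and keeps the first coordinates equal to the common element $x*^bx'$. Hence every such term is again an adjacent generator and lies in $D_{n-1}(X)_Y$. For $j\in\{i,i+1\}$, using $x*^hx=x$ (so that $(x,g)*(x,h)=(x,h^{-1}gh)$) I would compute the four surviving terms and recognize their alternating sum, via the group identity $h\cdot(h^{-1}gh)=gh$, as the difference of the two product-type generators built from $(g,h)$ and from $(h,h^{-1}gh)$. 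Thus this contribution also lies in $D_{n-1}(X)_Y$.

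Next consider the product-type generators $u=T_1-T_2-T_3$, where $T_1=(\ldots,(x,gh),\ldots)$, $T_2=(\ldots,(x,g),\ldots)$ and $T_3=((\ldots)*(x,g),(x,h),\ldots)$, all with the distinguished slot in position $i$, and again split by the position $j$ of the face. For $j\ne i$ the three faces reassemble into a product-type generator of $B_{n-1}(X)_Y$: the only nontrivial point is matching the prefixes acted on by $(x,g)$, which follows from the self-distributivity $(p*q_j)*(x,g)=(p*(x,g))*(q_j*(x,g))$ of the quandle $Q$ (and its $X$-set analogue on the $y$-slot). The crucial case is $j=i$: there $d_i^0T_1$ and $d_i^0T_2$ cancel, while $x*^{gh}y=(x*^gy)*^hy$ together with the conjugation in the second coordinate gives $(p*(x,g))*(x,h)=p*(x,gh)$, so that $d_i^1T_3=d_i^1T_1$; consequently the entire $j=i$ contribution vanishes. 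Summing over $j$ then shows $\partial_n u\in D_{n-1}(X)_Y$.

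The main obstacle is the bookkeeping in the product-type case: one must check that a face applied away from the distinguished slot really does carry the three-term relation to another three-term relation, which hinges on the compatibility of the $*(x,g)$-action with the faces, i.e.\ on self-distributivity in $Q$ and the $X$-set axiom; and one must verify the exact cancellation at $j=i$, which is where the $G$-family product axiom enters in an essential way. Once these two identities are in hand, every term produced by $\partial_n$ is manifestly one of the two listed generator types, and the lemma follows.
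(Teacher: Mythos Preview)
Your approach is essentially the same as the paper's: both proofs treat the two families of generators separately, split $\partial_n$ according to the position $j$ of the face, observe that faces with $j$ away from the distinguished slot(s) return generators of the same type (using that $*q_j$ preserves equality of the $X$-components for adjacent generators, and self-distributivity for the product-type generators), and then handle the remaining faces by direct computation using the $G$-family axiom $x*^{gh}y=(x*^gy)*^hy$ and the group identity $h\cdot(h^{-1}gh)=gh$.

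There is one small omission in your $j=i$ analysis for the product-type generator $u=T_1-T_2-T_3$. You correctly note $d_i^0T_1=d_i^0T_2$ and $d_i^1T_1=d_i^1T_3$, but a third cancellation is also needed: $d_i^1T_2=((y,q_1,\ldots,q_{i-1})*(x,g),q_{i+1},\ldots,q_n)=d_i^0T_3$. Only with all three pairings does the $j=i$ contribution $(d_i^0-d_i^1)(T_1-T_2-T_3)$ vanish in $B_{n-1}(X)_Y$. This is a trivial fix and does not affect the overall correctness of your argument; the paper's proof records exactly these cancellations (the first via ``$(-1)^i(y,\ldots,q_{i+1},\ldots,q_n)$'' appearing once, the third via the displayed identity $(y,\ldots)*(x,gh)=((y,\ldots)*(x,g))*(x,h)$).
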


\begin{proof}
It is sufficient to show the equalities
\begin{align*}
&\partial_n(y,q_1,\ldots,q_{i-1},(x,g),(x,h),q_{i+2},\ldots,q_n)=0, \\
&\partial_n(y,q_1,\ldots,q_{i-1},(x,gh),q_{i+1},\ldots,q_n) \\
&=\partial_n(y,q_1,\ldots,q_{i-1},(x,g),q_{i+1},\ldots,q_n) \\
&~~+\partial_n((y,q_1,\ldots,q_{i-1})*(x,g),(x,h),q_{i+1},\ldots,q_n)
\end{align*}
in $B_{n-1}(X)_Y/D_{n-1}(X)_Y$.
We verify the first equality in the quotient group.
\begin{align*}
&\partial_n(y,q_1,\ldots,q_{i-1},(x,g),(x,h),q_{i+2},\ldots,q_n) \\
&=(-1)^i(y,q_1,\ldots,q_{i-1},(x,h),q_{i+2},\ldots,q_n) \\
&~~+(-1)^{i+1}(y,q_1,\ldots,q_{i-1},(x,g),q_{i+2},\ldots,q_n) \\
&~~-(-1)^i((y,q_1,\ldots,q_{i-1})*(x,g),(x,h),q_{i+2},\ldots,q_n) \\
&~~-(-1)^{i+1}((y,q_1,\ldots,q_{i-1},(x,g))*(x,h),q_{i+2},\ldots,q_n) \\
&=(-1)^i(y,q_1,\ldots,q_{i-1},(x,h),q_{i+2},\ldots,q_n) \\
&~~+(-1)^{i+1}(y,q_1,\ldots,q_{i-1},(x,gh),q_{i+2},\ldots,q_n) \\
&~~-(-1)^{i+1}((y,q_1,\ldots,q_{i-1})*(x,h),(x,h^{-1}gh),q_{i+2},\ldots,q_n) \\
&=0,
\end{align*}
where the first equality follows from
\[ ((y,q_1,\ldots,q_{i-1},(x,g),(x,h),q_{i+2},\ldots,q_{j-1})*q_j,q_{j+1},\ldots,q_n)=0. \]
We verify the second equality in the quotient group.
\begin{align*}
&\partial_n(y,q_1,\ldots,q_{i-1},(x,gh),q_{i+1},\ldots,q_n) \\
&=\sum_{j<i}(-1)^j(y,q_1,\ldots,q_{j-1},q_{j+1},\ldots,q_{i-1},(x,g),q_{i+1},\ldots,q_n) \\
&~~+\sum_{j<i}(-1)^j((y,q_1,\ldots,q_{j-1},q_{j+1},\ldots,q_{i-1})*(x,g),(x,h),q_{i+1},\ldots,q_n) \\
&~~+(-1)^i(y,q_1,\ldots,q_{i-1},q_{i+1},\ldots,q_n) \\
&~~+\sum_{j>i}(-1)^j(y,q_1,\ldots,q_{i-1},(x,g),q_{i+1},\ldots,q_{j-1},q_{j+1},\ldots,q_n) \\
&~~+\sum_{j>i}(-1)^j((y,q_1,\ldots,q_{i-1})*(x,g),(x,h),q_{i+1},\ldots,q_{j-1},q_{j+1},\ldots,q_n) \\
&~~-\sum_{j<i}(-1)^j((y,q_1,\ldots,q_{j-1})*q_j,q_{j+1},\ldots,q_{i-1},(x,g),q_{i+1},\ldots,q_n) \\
&~~-\sum_{j<i}(-1)^j(((y,q_1,\ldots,q_{j-1})*q_j,q_{j+1},\ldots,q_{i-1})*(x,g),(x,h),q_{i+1},\ldots,q_n) \\
&~~-(-1)^i((y,q_1,\ldots,q_{i-1})*(x,gh),q_{i+1},\ldots,q_n) \\
&~~-\sum_{j>i}(-1)^j((y,q_1,\ldots,q_{i-1},(x,gh),q_{i+1},\ldots,q_{j-1})*q_j,q_{j+1},\ldots,q_n) \\
&=\partial_n(y,q_1,\ldots,q_{i-1},(x,g),q_{i+1},\ldots,q_n) \\
&~~+\partial_n((y,q_1,\ldots,q_{i-1})*(x,g),(x,h),q_{i+1},\ldots,q_n),
\end{align*}
where the last equality follows from
\begin{align*}
&((y,q_1,\ldots,q_{i-1})*(x,gh),q_{i+1},\ldots,q_n) \\
&=(((y,q_1,\ldots,q_{i-1})*(x,g))*(x,h),q_{i+1},\ldots,q_n)
\end{align*}
and
\begin{align*}
&((y,q_1,\ldots,q_{i-1},(x,gh),q_{i+1},\ldots,q_{j-1})*q_j,q_{j+1},\ldots,q_n) \\
&=((y,q_1,\ldots,q_{i-1},(x,g),q_{i+1},\ldots,q_{j-1})*q_j,q_{j+1},\ldots,q_n) \\
&~~+(((y,q_1,\ldots,q_{i-1})*(x,g),(x,h),q_{i+1},\ldots,q_{j-1})*q_j,q_{j+1},\ldots,q_n).
\end{align*}
Then $\partial_n(D_n(X)_Y)\subset D_{n-1}(X)_Y$.
\end{proof}

We put $C_n(X)_Y=B_n(X)_Y/D_n(X)_Y$.
Then $C_*(X)_Y=(C_n(X)_Y,\partial_n)$ is a chain complex.
For an abelian group $A$, we define the cochain complex
$C^*(X;A)_Y=\operatorname{Hom}(C_*(X)_Y,A)$.
We denote by $H_n(X)_Y$ the $n$th homology group of $C_*(X)_Y$.

\section{Cocycle invariants}
\label{sec:cocycle_invariants}

Let $X$ be a $G$-family of quandles, and $Y$ an $X$-set.
Let $D$ be a diagram of an oriented spatial trivalent graph.
For an $X_Y$-coloring $C\in\operatorname{Col}_X(D)_Y$, we define the
weight $w(\chi;C)\in C_2(X)_Y$ at a crossing $\chi$ of $D$ as follows.
Let $\chi_1,\chi_2$ and $\chi_3$ be respectively the under-arcs and the
over-arc at a crossing $\chi$ such that the normal orientation of
$\chi_3$ points from $\chi_1$ to $\chi_2$.
Let $R_\chi$ be the region facing $\chi_1$ and $\chi_3$ such that the
normal orientations $\chi_1$ and $\chi_3$ point from $R_\chi$ to the
opposite regions with respect to $\chi_1$ and $\chi_3$, respectively.
Then we define
\[ w(\chi;C)=\epsilon(\chi)(C(R_\chi),C(\chi_1),C(\chi_3)), \]
where $\epsilon(\chi)\in\{1,-1\}$ is the sign of a crossing $\chi$.
We define a chain $W(D;C)\in C_2(X)_Y$ by
\[ W(D;C)=\sum_\chi w(\chi;C), \]
where $\chi$ runs over all crossings of $D$.

\begin{lemma} \label{lem:W(D;C)_2-cycle}
The chain $W(D;C)$ is a $2$-cycle of $C_*(X)_Y$.
Further, for cohomologous $2$-cocycles $\theta,\theta'$ of $C^*(X;A)_Y$,
we have $\theta(W(D;C))=\theta'(W(D;C))$.
\end{lemma}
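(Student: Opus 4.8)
The plan is to deduce the second assertion from the first and to concentrate the real work on showing that $W(D;C)$ is a $2$-cycle. For the second assertion, cohomologous $2$-cocycles differ by a coboundary: $\theta-\theta'=\delta\phi=\phi\circ\partial_2$ for some $\phi\in C^1(X;A)_Y$. Hence, once we know $\partial_2 W(D;C)=0$, we obtain
\[ \theta(W(D;C))-\theta'(W(D;C))=\phi(\partial_2 W(D;C))=0, \]
which is exactly what is claimed. It therefore remains to verify $\partial_2 W(D;C)=0$ in $C_1(X)_Y=B_1(X)_Y/D_1(X)_Y$.

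First I would compute the boundary of a single weight. For a crossing $\chi$ with under-arcs $\chi_1,\chi_2$ and over-arc $\chi_3$, so that $C(\chi_2)=C(\chi_1)*C(\chi_3)$, the degree-two boundary gives
\[ \partial_2 w(\chi;C)=\epsilon(\chi)\big[(C(R_\chi),C(\chi_1))-(C(R_\chi),C(\chi_3))+(C(R_\chi)*C(\chi_1),C(\chi_3))-(C(R_\chi)*C(\chi_3),C(\chi_2))\big]. \]
Each summand is a pair (region color, arc color), and the $X_Y$-coloring rule lets me read the shifted region colors $C(R_\chi)*C(\chi_i)$ as the colors of the regions adjacent across $\chi_i$. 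I would separate the four terms into over-arc contributions (those paired with $C(\chi_3)$) and under-arc contributions (those paired with $C(\chi_1)$ or $C(\chi_2)$), sum over all crossings, and collect the outcome arc by arc.

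The heart of the argument is a telescoping along each arc $\alpha$. As one travels along $\alpha$, the over-arc terms created at the successive crossings where $\alpha$ passes over another strand telescope, since the region color on the distinguished side changes exactly by the action of the under-strand color at each such crossing; only a single pair $(\,\text{region color},C(\alpha)\,)$ survives at each end of $\alpha$. When an end of $\alpha$ is an under-crossing, this surviving over-arc term is cancelled by the under-arc term $(C(R_\chi),C(\chi_1))$ or $-(C(R_\chi)*C(\chi_3),C(\chi_2))$ contributed there. Matching signs and region colors here is routine but delicate, and is where the crossing sign $\epsilon(\chi)$ and the normal-orientation conventions must be tracked carefully.

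The remaining contributions --- which I expect to be the genuinely new part --- sit at the trivalent vertices, where arcs terminate without an under-crossing to absorb the leftover. At a vertex $\omega$ the three incident arcs $\omega_1,\omega_2,\omega_3$ satisfy $(p_X\circ C)(\omega_1)=(p_X\circ C)(\omega_2)=(p_X\circ C)(\omega_3)=x$ together with $(p_G\circ C)(\omega_1)^{\epsilon(\omega_1;\omega)}(p_G\circ C)(\omega_2)^{\epsilon(\omega_2;\omega)}(p_G\circ C)(\omega_3)^{\epsilon(\omega_3;\omega)}=e$. I would show that the three residual pairs vanish in the quotient $C_1(X)_Y$ by using the defining relations of $D_1(X)_Y$, namely $(y,(x,gh))\equiv(y,(x,g))+(y*(x,g),(x,h))$ and $(y,(x,e))\equiv0$, together with the consequence $(y,(x,g))\equiv-(y*(x,g),(x,g^{-1}))$, to split and invert the $G$-components. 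The vertex relation $(p_G\circ C)(\omega_1)^{\epsilon(\omega_1;\omega)}(p_G\circ C)(\omega_2)^{\epsilon(\omega_2;\omega)}(p_G\circ C)(\omega_3)^{\epsilon(\omega_3;\omega)}=e$ is exactly what forces the telescoped $G$-component to become $e$, so that the residual sum collapses to a multiple of $(y,(x,e))\equiv 0$. Reconciling the geometric orientations $\epsilon(\omega_i;\omega)$ with the algebraic exponents in the $D_1$-relations is the step I expect to be the main obstacle, since this is where the handlebody structure, rather than the classical link case, really enters and where the $G$-family axioms built into $D_*(X)_Y$ do the essential work.
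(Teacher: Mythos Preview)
Your proposal is correct and follows the same strategy as the paper: reduce the second assertion to the first via $\theta-\theta'=\phi\circ\partial_2$, then compute $\partial_2 w(\chi;C)$ at each crossing, arrange a cancellation along arcs, and absorb the leftover at each trivalent vertex into $D_1(X)_Y$ using the relations $(y,(x,gh))\equiv(y,(x,g))+(y*(x,g),(x,h))$ and $(y,(x,e))\equiv0$.

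The only difference is bookkeeping. Where you perform an explicit telescoping along each arc (tracking over-arc terms through the successive crossings where the arc passes over, then matching the residuals against under-arc terms at the endpoints), the paper introduces \emph{semi-arcs}: the pieces obtained by cutting $D$ at all crossings \emph{and} at all over-passes, so that each semi-arc has exactly two endpoints, each a crossing or a vertex. The paper then verifies in one line that
\[
\partial_2 w(\chi;C)=\sum_{\alpha\in\mathcal{SA}(D;\chi)}\epsilon(\alpha;\chi)\,(C(R_\alpha),C(\alpha)),
\]
a signed sum over the four semi-arcs at $\chi$. Adding the analogous (and $D_1$-trivial) sums at vertices, every semi-arc appears exactly twice with opposite signs, so the total vanishes immediately. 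This packages your telescoping into a single symmetric identity and avoids the sign-matching you flag as ``routine but delicate''; otherwise the two arguments are the same.
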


\begin{proof}
It is sufficient to show that $W(D;C)$ is a $2$-cycle of $C_2(X)_Y$.
We denote by $\mathcal{SA}(D)$ the set of curves obtained from $D$ by
removing (small neighborhoods of) crossings and vertices.
We call a curve in $\mathcal{SA}(D)$ a \textit{semi-arc} of $D$.
We note that a semi-arc is obtained by dividing an over-arc at all
crossings.
We denote by $\mathcal{SA}(D;\chi)$ the set of semi-arcs incident to
$\chi$, where $\chi$ is a crossing or a vertex of $D$.

We define the orientation and the color of a semi-arc by those of the
arc including the semi-arc.
For a semi-arc $\alpha$, there is a unique region $R_\alpha$ facing
$\alpha$ such that the orientation of $\alpha$ points from the region
$R_\alpha$ to the opposite region with respect to $\alpha$.
For a semi-arc $\alpha$ incident to a crossing or a vertex $\chi$, we
define
\[ \epsilon(\alpha;\chi):=\begin{cases}
1 & \text{if the orientation of $\alpha$ points to $\chi$,} \\
-1 & \text{otherwise.}
\end{cases} \]

Let $\chi_1,\chi_2$ be the semi-arcs incident to a crossing $\chi$ such
that they originate from the under-arcs at $\chi$ and that the normal
orientation of the over-arc points from $\chi_1$ to $\chi_2$.
Let $\chi_3,\chi_4$ be the semi-arcs incident to a crossing $\chi$ such
that they originate from the over-arc at $\chi$ and that the normal
orientation of the under-arcs points from $\chi_3$ to $\chi_4$
(see Figure~\ref{fig:semi-arcs}).
Then we have
\begin{align*}
\partial_2(w(\chi;C))
&=-\epsilon(\chi)(C(R_{\chi_1}),C(\chi_3))+\epsilon(\chi)(C(R_{\chi_1}),C(\chi_1)) \\
&\hspace{5mm}+\epsilon(\chi)(C(R_{\chi_1})*C(\chi_1),C(\chi_3)) \\
&\hspace{5mm}-\epsilon(\chi)(C(R_{\chi_1})*C(\chi_3),C(\chi_1)*C(\chi_3)) \\
&=\sum_{\alpha\in\mathcal{SA}(D;\chi)}\epsilon(\alpha;\chi)(C(R_{\alpha}),C(\alpha)).
\end{align*}
Since
$\sum_{\alpha\in\mathcal{SA}(D;\omega)}
\epsilon(\alpha;\omega)(C(R_{\alpha}),C(\alpha))$ is an element of
$D_1(X)_Y$ for a vertex $\omega$, we have
\begin{align*}
\partial_2\left(\sum_{\chi}w(\chi;C)\right)
&=\sum_{\chi}\sum_{\alpha\in\mathcal{SA}(D;\chi)}
\epsilon(\alpha;\chi)(C(R_{\alpha}),C(\alpha)) \\
&=\sum_{\chi}\sum_{\alpha\in\mathcal{SA}(D;\chi)}
\epsilon(\alpha;\chi)(C(R_{\alpha}),C(\alpha)) \\
&\hspace{5mm}+\sum_{\omega}\sum_{\alpha\in\mathcal{SA}(D;\omega)}
\epsilon(\alpha;\omega)(C(R_{\alpha}),C(\alpha)) \\
&=\sum_{\alpha\in\mathcal{SA}(D)}
((C(R_{\alpha}),C(\alpha))-(C(R_{\alpha}),C(\alpha))) \\
&=0
\end{align*}
in $C_1(X)_Y$, where $\chi$ and $\omega$ respectively run over all
crossings and vertices of $D$.
\end{proof}

\begin{figure}
\mbox{}\hfill
\begin{picture}(90,110)
\put(15,65){\line(1,0){25}}
\put(50,65){\line(1,0){25}}
\put(45,35){\line(0,1){60}}
\put(46,45){\makebox(0,0){$\rightarrow$}}
\put(25,66){\makebox(0,0){$\uparrow$}}
\put(5,65){\makebox(0,0){$\chi_1$}}
\put(85,65){\makebox(0,0){$\chi_2$}}
\put(45,25){\makebox(0,0){$\chi_3$}}
\put(45,105){\makebox(0,0){$\chi_4$}}
\put(45,5){\makebox(0,0){$\epsilon(\chi)=1$}}
\end{picture}
\hfill
\begin{picture}(90,110)
\put(15,65){\line(1,0){25}}
\put(50,65){\line(1,0){25}}
\put(45,35){\line(0,1){60}}
\put(46,45){\makebox(0,0){$\rightarrow$}}
\put(25,64){\makebox(0,0){$\downarrow$}}
\put(5,65){\makebox(0,0){$\chi_1$}}
\put(85,65){\makebox(0,0){$\chi_2$}}
\put(45,25){\makebox(0,0){$\chi_4$}}
\put(45,105){\makebox(0,0){$\chi_3$}}
\put(45,5){\makebox(0,0){$\epsilon(\chi)=-1$}}
\end{picture}
\hfill\mbox{}
\caption{}
\label{fig:semi-arcs}
\end{figure}

We recall that, for $C\in\operatorname{Col}_X(D)_Y$, there is a unique
$X_Y$-coloring $C_{D,E}\in\operatorname{Col}_X(E)$ such that
$C|_{\mathcal{A}(D,E)}=C_{D,E}|_{\mathcal{A}(D,E)}$ and
$C|_{\mathcal{R}(D,E)}=C_{D,E}|_{\mathcal{A}(R,E)}$ by
Lemma~\ref{lem:XY-coloring}.

\begin{lemma} \label{lem:W(D;C)_Rmove}
Let $D$ be a diagram of an oriented spatial trivalent graph.
Let $E$ be a diagram obtained by applying one of the R1--R6 moves to the
diagram $D$ once, where we choose orientations for $E$ which agree with
those for $D$ on $\mathcal{A}(D,E)$.
For $C\in\operatorname{Col}_X(D)_Y$ and
$C_{D,E}\in\operatorname{Col}_X(E)_Y$ such that
$C|_{\mathcal{A}(D,E)}=C_{D,E}|_{\mathcal{A}(D,E)}$ and
$C|_{\mathcal{R}(D,E)}=C_{D,E}|_{\mathcal{R}(D,E)}$, we have
$[W(D;C)]=[W(E;C_{D,E})]\in H_2(X)_Y$.
\end{lemma}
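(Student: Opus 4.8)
The plan is to reduce to a purely local computation and then dispatch the six moves one at a time. Because $C$ and $C_{D,E}$ agree on $\mathcal{A}(D,E)$ and $\mathcal{R}(D,E)$ by Lemma~\ref{lem:XY-coloring}, every crossing lying outside the disk in which the move is performed contributes identical weights to $W(D;C)$ and to the corresponding crossing of $W(E;C_{D,E})$. Hence $W(D;C)-W(E;C_{D,E})$ equals the difference $W_{\mathrm{loc}}$ of the weights carried by the crossings inside the disk, and it suffices to prove that $W_{\mathrm{loc}}$ represents $0$ in $H_2(X)_Y$, that is, that it is a boundary of $C_*(X)_Y$ or already lies in $D_2(X)_Y$. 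For each move I would fix one representative choice of orientations and over/under data; the remaining cases are strictly parallel.

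Three families are immediate. The move R6 creates no crossing, so $W_{\mathrm{loc}}=0$ identically. In R1 and R4 the single new crossing is a self-crossing of one edge of the spine, so its over- and under-arcs have a common $X$-component; the weight is then $\pm(C(R),(x,g),(x,h))$, a generator of the first defining family of $D_2(X)_Y$, and hence vanishes in $C_2(X)_Y$. In R2 the two new crossings have opposite signs; since the under-arc colour is first multiplied and then divided back, which is exactly the identity $(a*^gb)*^{g^{-1}}b=a$, one checks that the two weights coincide as $2$-chains and therefore cancel.

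For R3 I would exhibit $W_{\mathrm{loc}}$ as an explicit boundary. Writing $q_1,q_2,q_3\in Q$ for the colours of the three strands at the triple point and $y\in Y$ for the distinguished region, the three crossings on one side of the move carry the weights $(y,q_1,q_2)$, $(y,q_1,q_3)$, $(y,q_2,q_3)$, while those on the other side carry $(y*q_3,q_1*q_3,q_2*q_3)$ together with the two analogous shifted terms; comparing with the expansion of $\partial_3(y,q_1,q_2,q_3)$ shows $W_{\mathrm{loc}}=\pm\partial_3(y,q_1,q_2,q_3)$. All quandle identities invoked here are valid because $Q$ is a quandle by Proposition~\ref{prop:associated_quandle}, so $W_{\mathrm{loc}}$ is a boundary in $C_*(X)_Y$ and the two classes agree.

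The substantive case, and where I expect the real work, is R5. Here a strand with colour $b\in Q$ is pushed across a trivalent vertex, passing under its three incident edges, which by the vertex colouring condition share the $X$-component $x$ and carry $G$-components $g$, $h$, $(gh)^{-1}$; the strand is acted on successively by $(x,g)$, $(x,h)$, $(x,(gh)^{-1})$, and since $(b*(x,g))*(x,h)=b*(x,gh)$ the composite action is that of $(x,e)$, so the strand returns to $b$, matching the crossingless side. The three crossings carry the weights $(R_1,b,(x,g))$, $(R_2,b*(x,g),(x,h))$ and $(R_3,b*(x,gh),(x,(gh)^{-1}))$, where the region rule forces $R_2=R_1*(x,g)$ and $R_3=R_2*(x,h)=R_1*(x,gh)$. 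The point is that the second defining family of $D_2(X)_Y$, the relation expressing $(x,gh)$ through $(x,g)$ followed by $(x,h)$ that is precisely the one exploited in Lemma~\ref{lem:subcomplex}, lets the first two weights telescope to $(R_1,b,(x,gh))$; telescoping once more against the third weight produces $(R_1,b,(x,gh\cdot(gh)^{-1}))=(R_1,b,(x,e))$, which lies in $D_2(X)_Y$ and so is $0$ in $C_2(X)_Y$. The main obstacle is the orientation bookkeeping: the three edges are oriented independently, so the signs $\epsilon(\chi_i)$ and the exponents $\epsilon(\omega_i;\omega)$ must be matched against the $G$-components $g^{\pm1},h^{\pm1}$ so that the telescoping closes up with the vertex relation $g_1^{\epsilon_1}g_2^{\epsilon_2}g_3^{\epsilon_3}=e$; once one orientation pattern is handled carefully, the rest follow by the same mechanism.
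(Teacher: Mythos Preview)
Your plan coincides with the paper's own proof: the local difference lies in $D_2(X)_Y$ for R1, R4 and R5, vanishes by cancellation of opposite signs for R2, is an image of $\partial_3$ for R3, and is identically zero for R6; the paper states exactly this, in one sentence per case, without writing out the chains. One bookkeeping slip worth fixing in your R3 sketch: the three weights on a single side are not all of the shape $(y,q_i,q_j)$ with a common region $y$; once the bottom strand has passed under $q_2$ the next crossing already carries $(y*q_2,\,q_1*q_2,\,q_3)$, so the correct split of the six terms of $\partial_3(y,q_1,q_2,q_3)$ between the two sides is not the one you wrote, though your conclusion that the local difference equals $\pm\partial_3(y,q_1,q_2,q_3)$ is unaffected.
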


\begin{proof}
We have the invariance under the R1, R4 and R5 moves, since the
difference between $[W(D;C)]$ and $[W(E;C_{D,E})]$ is an element of
$D_2(X)_Y$.
The invariance under the R2 move follows from the signs of the crossings
which appear in the move.
We have the invariance under the R3 move, since the difference between
$[W(D;C)]$ and $[W(E;C_{D,E})]$ is an image of $\partial_3$.
We have the invariance under the R6 move, since no crossings appear in
the move.
\end{proof}

We denote by $G_H$ (resp.~$G_K$) the fundamental group of the exterior
of a handlebody-link $H$ (resp.~a spatial graph $K$).
When $H$ is represented by $K$, the groups $G_H$ and $G_K$ are
identical.
Let $D$ be a diagram of an oriented spatial trivalent graph $K$.
By the definition of an $X_Y$-coloring $C$ of $D$, the map
$p_G\circ C|_{\mathcal{A}(D)}$ represents a homomorphism from $G_K$ to
$G$, which we denote by $\rho_C\in\operatorname{Hom}(G_K,G)$.
For $\rho\in\operatorname{Hom}(G_K,G)$, we define
\[ \operatorname{Col}_X(D;\rho)_Y
=\{C\in\operatorname{Col}_X(D)_Y\,|\,\rho_C=\rho \}. \]
For a $2$-cocycle $\theta$ of $C^*(X;A)_Y$, we define
\begin{align*}
\mathcal{H}(D)&:=\{[W(D;C)]\in H_2(X)_Y
\,|\,C\in\operatorname{Col}_X(D)_Y\}, \\
\Phi_\theta(D)&:=\{\theta(W(D;C))\in A
\,|\,C\in\operatorname{Col}_X(D)_Y\}, \\
\mathcal{H}(D;\rho)&:=\{[W(D;C)]\in H_2(X)_Y
\,|\,C\in\operatorname{Col}_X(D;\rho)_Y\}, \\
\Phi_\theta(D;\rho)&:=\{\theta(W(D;C))\in A
\,|\,C\in\operatorname{Col}_X(D;\rho)_Y\}
\end{align*}
as multisets.

\begin{lemma} \label{lem:for_conjugation}
Let $D$ be a diagram of an oriented spatial trivalent graph $K$.
For $\rho,\rho'\in\operatorname{Hom}(G_K,G)$ such that $\rho$ and
$\rho'$ are conjugate, we have
$\mathcal{H}(D;\rho)=\mathcal{H}(D;\rho')$ and
$\Phi_\theta(D;\rho)=\Phi_\theta(D;\rho')$.
\end{lemma}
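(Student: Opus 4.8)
The plan is to exhibit an explicit bijection between the colorings counted by $\mathcal{H}(D;\rho)$ and those counted by $\mathcal{H}(D;\rho')$ that preserves the homology class $[W(D;C)]$, and likewise the cocycle value $\theta(W(D;C))$. Since $\rho$ and $\rho'$ are conjugate, there is a fixed element $a\in G$ with $\rho'=a^{-1}\rho a$ (i.e.\ $\rho'(\gamma)=a^{-1}\rho(\gamma)a$ for every $\gamma\in G_K$). The guiding idea is that conjugating the group-coordinate of every color by $a$ should turn a $\rho$-coloring into a $\rho'$-coloring while acting on the weight $2$-cycle only by a relabelling that is trivial on homology.

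First I would define, for an $X_Y$-coloring $C\in\operatorname{Col}_X(D;\rho)_Y$, a new assignment $C^a$ obtained by sending an arc color $(x,g)$ to $(x,a^{-1}ga)$ and leaving the $X$-coordinate untouched; on regions one must apply a compatible right action of $a$ coming from the $X$-set structure. Concretely, using the operations $*^g$, I would set the new region label to be the one forced by the arc relations, and the key point is that the associated-quandle operation transforms predictably: from $(x,g)*(y,h)=(x*^hy,h^{-1}gh)$ one checks that conjugating all group-coordinates by $a$ is compatible with $*$, because $(h^{-1}gh)\mapsto a^{-1}h^{-1}gh a=(a^{-1}ha)^{-1}(a^{-1}ga)(a^{-1}ha)$, so the crossing and vertex coloring conditions are preserved and $\rho_{C^a}=\rho'$. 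I would verify that $C\mapsto C^a$ is a bijection $\operatorname{Col}_X(D;\rho)_Y\to\operatorname{Col}_X(D;\rho')_Y$ with inverse given by conjugation by $a^{-1}$.

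Next I would track the effect of this bijection on the weight. At each crossing $\chi$ the weight $w(\chi;C)=\epsilon(\chi)(C(R_\chi),C(\chi_1),C(\chi_3))$ becomes, under $C^a$, the generator with every group-coordinate conjugated by $a$ and the region relabelled accordingly. The claim I must establish is that the induced map on chains sends $W(D;C)$ to a chain homologous to $W(D;C^a)$—ideally equal in $H_2(X)_Y$. I expect this to follow because conjugation by $a$ induces a chain map on $C_*(X)_Y$: the relations generating $D_n(X)_Y$ (the $(x,gh)$-splitting relation and the degeneracy $((x,g),(x,h))$ relation) are respected by the substitution $g\mapsto a^{-1}ga$, since that substitution is a group automorphism and commutes with the quandle operation as shown above, so it descends to $C_*(X)_Y$ and commutes with $\partial_*$.

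The main obstacle will be the homology-invariance step, not the bijection itself: conjugation by $a$ genuinely changes the chain $W(D;C)$, so I cannot hope for equality on the nose in $C_2(X)_Y$, only equality of homology classes. The cleanest route is to prove that conjugation by $a$ induces an automorphism of the chain complex $C_*(X)_Y$ that is chain-homotopic to the identity (or at least acts as the identity on $H_2(X)_Y$); then $[W(D;C^a)]=[W(D;C)]$ follows formally, giving $\mathcal{H}(D;\rho)=\mathcal{H}(D;\rho')$ as multisets. For $\Phi_\theta$, I would additionally need $\theta$ to be insensitive to this conjugation; this is forced once the induced map is the identity on homology, because $\theta$ is a cocycle and $\theta(W(D;C))$ depends only on $[W(D;C)]$ by Lemma~\ref{lem:W(D;C)_2-cycle}. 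I would therefore concentrate the argument on showing the conjugation action is the identity on $H_2(X)_Y$, which is the crux of the lemma.
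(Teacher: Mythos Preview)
Your proposed bijection is not well-defined. You send an arc color $(x,g)$ to $(x,a^{-1}ga)$ and check only the $G$-coordinate of the crossing relation. But the $X$-coordinate fails: with $C(\chi_1)=(x_1,g_1)$ and $C(\chi_3)=(x_3,g_3)$ one has $C(\chi_2)=(x_1*^{g_3}x_3,\,g_3^{-1}g_1g_3)$, while after your substitution the crossing relation would force the first coordinate of $C^a(\chi_2)$ to be $x_1*^{a^{-1}g_3a}x_3$. There is no reason for $x*^{g_3}y$ and $x*^{a^{-1}g_3a}y$ to coincide, so $C^a$ is generally not an $X$-coloring. The same issue blocks any action of $a$ on regions ``forced by the arc relations'': the $X$-set axioms give maps $Y\times X\to Y$ indexed by $G$, not an action of $G$ on $Y$ alone.

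The paper fixes this by changing both coordinates simultaneously: choose any $x_0\in X$, set $q_0=(x_0,g_0)\in Q$, and define $f(C)(\alpha)=C(\alpha)*q_0$ on arcs and regions alike. The third quandle axiom for the associated quandle $Q$ then makes $f(C)$ an $X_Y$-coloring automatically, and since $p_G((x,g)*q_0)=g_0^{-1}gg_0$ one gets $\rho_{f(C)}=\rho'$. For the homology step the paper does not attempt an algebraic chain homotopy; instead it introduces a small oriented loop colored $q_0$ in the outermost region, slides it over the whole diagram via R2, R3, R5 moves (which preserve $[W(-;-)]$ by Lemma~\ref{lem:W(D;C)_Rmove}), and then removes it. This geometric trick produces exactly $f(C)$ and costs no homology, giving $[W(D;C)]=[W(D;f(C))]$. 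Your instinct that the action of $*q_0$ on chains is chain-homotopic to the identity is correct and would give an alternative algebraic proof once the bijection is repaired, but the conjugation-only map you wrote down does not even yield a chain map on $C_*(X)_Y$.
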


\begin{proof}
Let $g_0$ be an element of $G$ such that $\rho'(x)=g_0^{-1}\rho(x)g_0$
for any $x\in G_K$.
Fix $x_0\in X$.
We set $q_0:=(x_0,g_0)$.
Let $f:\operatorname{Col}_X(D;\rho)_Y\to\operatorname{Col}_X(D;\rho')_Y$
be the bijection defined by $f(C)(x)=C(x)*q_0$
(see Figure~\ref{fig:coloring_shift1}).

\begin{figure}
\mbox{}\hfill
\begin{minipage}{105pt}
\begin{picture}(105,80)
\put(10,35){\framebox(20,20){$y$}}
\put(60,35){\framebox(35,20){$y*q$}}
\put(45,15){\line(0,1){60}}
\put(46,25){\makebox(0,0){$\rightarrow$}}
\put(45,5){\makebox(0,0){$q$}}
\end{picture}
\end{minipage}
\hspace{5mm}$\stackrel{f}{\mapsto}$\hspace{5mm}
\begin{minipage}{135pt}
\begin{picture}(135,80)
\put(10,35){\framebox(35,20){$y*q_0$}}
\put(75,35){\framebox(50,20){$(y*q)*q_0$}}
\put(60,15){\line(0,1){60}}
\put(61,25){\makebox(0,0){$\rightarrow$}}
\put(60,5){\makebox(0,0){$q*q_0$}}
\end{picture}
\end{minipage}
\hfill\mbox{}
\caption{}
\label{fig:coloring_shift1}
\end{figure}

We prove $[W(D;C)]=[W(D;f(C))]\in H_2(X)_Y$ for
$C\in\operatorname{Col}_X(D;\rho)_Y$.
We assume that spatial trivalent graphs are drawn in
$\mathbb{R}^2(\subset S^2)$.
Let $D'$ be a diagram obtained from $D$ by putting an oriented loop
$\gamma$ in the outermost region $R_\infty$ so that the loop bounds a
disk, where the loop is oriented counterclockwise
(see Figure~\ref{fig:coloring_shift2}).
Let $C'$ be the $X_Y$-coloring of $D'$ defined by $C'(\gamma)=q_0$ and
$C'=C$ on $\mathcal{A}(D,D')\cup\mathcal{R}(D,D')$.
Then we note that $C'(R'_\infty)=C(R_\infty)*q_0$ for the region
$R'_\infty$ surrounded by the loop $\gamma$ in $D'$.
We deform the diagram $D'$ by using R2, R3 and R5 moves so that the loop
passes over all arcs of $D$ exactly once.
Then we denote by $D''$ and $C''\in\operatorname{Col}_X(D'')_Y$ the
resulting diagram and the corresponding $X_Y$-coloring of $D''$,
respectively.
We obtain the $X_Y$-coloring $f(C)$ from $C''$ by removing the loop from
$D''$, which also implies that $f$ is well-defined.

\begin{figure}
\mbox{}\hfill
\begin{minipage}{70pt}
\begin{picture}(70,60)
\put(50,40){\circle{34}}
\put(33,40){\makebox(0,0){$\rightarrow$}}
\put(0,33){\framebox(14,14){$D$}}
\put(35,5){\makebox(0,0){$D'$}}
\end{picture}
\end{minipage}
\hfill
\begin{minipage}{40pt}
\begin{picture}(40,60)
\put(20,40){\circle{34}}
\put(3,40){\makebox(0,0){$\rightarrow$}}
\put(13,33){\framebox(14,14){$D$}}
\put(20,5){\makebox(0,0){$D''$}}
\end{picture}
\end{minipage}
\hfill\mbox{}
\caption{}
\label{fig:coloring_shift2}
\end{figure}

Since no crossings increase or decrease when we add or remove the loop
$\gamma$, we have
\[ [W(D;C)]=[W(D';C')]=[W(D'';C'')]=[W(D;f(C))], \]
where the second equality follows from
Lemma~\ref{lem:W(D;C)_Rmove}.
Then we have
$\mathcal{H}(D;\rho)=\mathcal{H}(D;\rho')$ and
$\Phi_\theta(D;\rho)=\Phi_\theta(D;\rho')$.
\end{proof}

We denote by $\operatorname{Conj}(G_K,G)$ the set of conjugacy classes
of homomorphisms from $G_K$ to $G$.
By Lemma~\ref{lem:for_conjugation}, $\mathcal{H}(D;\rho)$ and
$\Phi_\theta(D;\rho)$ are well-defined for
$\rho\in\operatorname{Conj}(G_K,G)$.

\begin{lemma} \label{lem:for_orientation}
Let $D$ be a diagram of an oriented spatial trivalent graph $K$.
Let $E$ be a diagram obtained from $D$ by reversing the orientation of
an edge $e$.
For $\rho\in\operatorname{Hom}(G_K,G)$, we have
$\mathcal{H}(D)=\mathcal{H}(E)$, $\Phi_\theta(D)=\Phi_\theta(E)$,
$\mathcal{H}(D;\rho)=\mathcal{H}(E;\rho)$ and
$\Phi_\theta(D;\rho)=\Phi_\theta(E;\rho)$.
\end{lemma}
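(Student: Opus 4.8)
The plan is to construct an explicit, sign-compatible bijection between the colorings of $D$ and those of $E$ built from the good involution $f$, and then to show that it carries the weight cycle of $D$ to that of $E$ at the chain level modulo the degenerate subgroup $D_2(X)_Y$.

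First I would use the good involution $f\colon Q\to Q$, $f((x,g))=(x,g^{-1})$, of the associated quandle $Q=X\times G$ to define a map $\psi\colon\operatorname{Col}_X(D)_Y\to\operatorname{Col}_X(E)_Y$. For $C\in\operatorname{Col}_X(D)_Y$, I set $\psi(C)(\beta)=f(C(\beta))$ on every arc $\beta$ lying in the reversed edge $e$, and $\psi(C)=C$ on all other arcs and on all regions. That $\psi(C)$ is again a coloring follows from the good-involution identities $f(a*b)=f(a)*b$ and $(a*b)*f(b)=a$, together with the $X$-set analogue $(y*q)*f(q)=y$ (each of which I would verify directly from $(x,g)*(y,h)=(x*^hy,h^{-1}gh)$ and the $G$-family and $X$-set axioms): at a crossing meeting $e$, one under/over color is replaced by its $f$-image while the normal orientation of the reversed strand flips, and these two changes cancel; at a vertex the $X$-parts are untouched, while reversing $e$ flips the exponent $\epsilon(\cdot;\omega)$ on the arc of $e$, which is exactly compensated by the inversion of its $G$-part. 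Since $f$ inverts the $G$-coordinate and reversing $e$ inverts the corresponding meridian, the induced representation is unchanged, so $\rho_{\psi(C)}=\rho_C$; as $f$ is an involution, $\psi$ is a bijection that moreover restricts to a bijection $\operatorname{Col}_X(D;\rho)_Y\to\operatorname{Col}_X(E;\rho)_Y$ for each $\rho$. This reduces all four asserted equalities of multisets to the single statement that $W(D;C)=W(E;\psi(C))$ as elements of $C_2(X)_Y$ for every $C$.

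I would prove this crossing by crossing. Crossings not meeting $e$ contribute identical weights. For a crossing $\chi$ meeting $e$, the sign $\epsilon(\chi)$ is reversed when exactly one of the two strands lies in $e$ and is unchanged when both do. In each case $w(\chi;\psi(C))$ is obtained from $w(\chi;C)$ by inverting the $G$-coordinate of the reversed color(s), relabeling the under-arcs and the region $R_\chi$ according to the flipped normal orientation, and multiplying by the new sign; and these replacements are absorbed precisely by the symmetric degeneracy relation recorded after the definition of $D_n(X)_Y$, namely that $(y,q_1,(x,g))+(y*(x,g),q_1*(x,g),(x,g^{-1}))$ and $(y,(x,g),q_2)+(y*(x,g),(x,g^{-1}),q_2)$ lie in $D_2(X)_Y$. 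When the over-strand is the reversed one I would apply the relation in the last slot; when an under-strand is reversed I would apply it in the first quandle slot; and for a self-crossing of $e$ I would apply it twice. In every configuration the sign rule above matches the sign produced by the relation, so $w(\chi;\psi(C))\equiv w(\chi;C)\pmod{D_2(X)_Y}$; summing over all crossings gives $W(E;\psi(C))\equiv W(D;C)$ in $C_2(X)_Y$, whence $[W(D;C)]=[W(E;\psi(C))]$ and, since a cocycle $\theta$ descends to $C_2(X)_Y$, also $\theta(W(D;C))=\theta(W(E;\psi(C)))$.

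The main obstacle I anticipate is the geometric bookkeeping at a crossing incident to $e$: after the normal orientation of the reversed strand flips, one must correctly identify which arc becomes $\chi_1$ and which region becomes $R_\chi$, and then express the colors of the relabeled arc and region through the quandle and $X$-set operations (using $(a*b)*f(b)=a$ and $f(a*b)=f(a)*b$) so that they coincide exactly with the two terms of the degeneracy relation. Once the over-strand, under-strand, and self-crossing cases are laid out with the correct labels and signs, the cancellation is forced by the definition of $D_2(X)_Y$, and no additional input is required.
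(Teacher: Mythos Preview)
Your proposal is correct and is essentially the paper's own argument: the bijection $\psi$ you build from the good involution is exactly the map the paper defines, and your crossing-by-crossing verification that $w(\chi;\psi(C))\equiv w(\chi;C)$ in $C_2(X)_Y$ via the degeneracy relations $(y,q_1,(x,g))+ (y*(x,g),q_1*(x,g),(x,g^{-1}))\in D_2(X)_Y$ and its first-slot analogue is precisely what the paper records (displayed as a chain of equalities in $C_2(X)_Y$ together with a figure for the four orientation cases). The only cosmetic difference is that you phrase the construction in the language of good involutions and separate the over-strand, under-strand, and self-crossing cases explicitly, whereas the paper packages all cases into one string of identities.
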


\begin{proof}
It is sufficient to show that
$\mathcal{H}(D;\rho)=\mathcal{H}(E;\rho)$.
We define a bijection
$f:\operatorname{Col}_X(D;\rho)_Y\to\operatorname{Col}_X(E;\rho)_Y$
by $f(C)(\alpha)=(p_X(C(\alpha)),p_G(C(\alpha))^{-1})$ if $\alpha$ is an
arc originates from the edge $e$, $f(C)(\alpha)=C(\alpha)$ otherwise.
We remark that $\rho_{f(C)}=\rho_C=\rho$.
The map $f$ is well-defined, since $z_1*(x,g)=z_2$ is equivalent to
$z_2*(x,g^{-1})=z_1$.
Then we have $w(\chi;C)=w(\chi;f(C))$ for every crossing $\chi$, since
we have
\begin{align*}
(y,(x_1,g_1),(x_2,g_2))
&=-(y*(x_1,g_1),(x_1,g_1^{-1}),(x_2,g_2)) \\
&=-(y*(x_2,g_2),(x_1,g_1)*(x_2,g_2),(x_2,g_2^{-1})) \\
&=((y*(x_1,g_1))*(x_2,g_2),(x_1,g_1^{-1})*(x_2,g_2),(x_2,g_2^{-1}))
\end{align*}
in $C_2(X)_Y$ (see Figure~\ref{fig:for_orientation}).
Then we have $\mathcal{H}(D;\rho)=\mathcal{H}(E;\rho)$.
\end{proof}

\begin{figure}
\begin{picture}(140,90)
\put(35,45){\line(1,0){25}}
\put(70,45){\line(1,0){25}}
\put(65,15){\line(0,1){60}}
\put(66,25){\makebox(0,0){$\rightarrow$}}
\put(45,46){\makebox(0,0){$\uparrow$}}
\put(17,45){\makebox(0,0){$(x_1,g_1)$}}
\put(65,10){\makebox(0,0){$(x_2,g_2)$}}
\put(40,20){\framebox(15,15){$y$}}
\end{picture}
\hfill
\begin{picture}(180,90)
\put(35,45){\line(1,0){25}}
\put(70,45){\line(1,0){25}}
\put(65,15){\line(0,1){60}}
\put(66,25){\makebox(0,0){$\rightarrow$}}
\put(45,44){\makebox(0,0){$\downarrow$}}
\put(13,45){\makebox(0,0){$(x_1,g_1^{-1})$}}
\put(65,10){\makebox(0,0){$(x_2,g_2)$}}
\put(0,55){\framebox(55,15){$y*(x_1,g_1)$}}
\end{picture}%
\\
\begin{picture}(140,90)
\put(35,45){\line(1,0){25}}
\put(70,45){\line(1,0){25}}
\put(65,15){\line(0,1){60}}
\put(64,25){\makebox(0,0){$\leftarrow$}}
\put(85,46){\makebox(0,0){$\uparrow$}}
\put(134,45){\makebox(0,0){$(x_1,g_1)*(x_2,g_2)$}}
\put(65,10){\makebox(0,0){$(x_2,g_2^{-1})$}}
\put(75,20){\framebox(55,15){$y*(x_2,g_2)$}}
\end{picture}
\hfill
\begin{picture}(180,90)
\put(35,45){\line(1,0){25}}
\put(70,45){\line(1,0){25}}
\put(65,15){\line(0,1){60}}
\put(64,25){\makebox(0,0){$\leftarrow$}}
\put(85,44){\makebox(0,0){$\downarrow$}}
\put(137,45){\makebox(0,0){$(x_1,g_1^{-1})*(x_2,g_2)$}}
\put(65,10){\makebox(0,0){$(x_2,g_2^{-1})$}}
\put(75,55){\framebox(100,15){$(y*(x_1,g_1))*(x_2,g_2)$}}
\end{picture}
\caption{}
\label{fig:for_orientation}
\end{figure}

By Lemma~\ref{lem:for_orientation}, 
$\mathcal{H}(D)$, $\Phi_\theta(D)$, $\mathcal{H}(D;\rho)$ and
$\Phi_\theta(D;\rho)$ are well-defined for a diagram $D$ of an
unoriented spatial trivalent graph, which is a diagram of a
handlebody-link.
For a diagram $D$ of a handlebody-link $H$, we define
\begin{align*}
\mathcal{H}^{\rm hom}(D)&:=
\{\mathcal{H}(D;\rho)\,|\,\rho\in\operatorname{Hom}(G_H,G)\}, \\
\Phi_\theta^{\rm hom}(D)&:=
\{\Phi_\theta(D;\rho)\,|\,\rho\in\operatorname{Hom}(G_H,G)\}, \\
\mathcal{H}^{\rm conj}(D)&:=
\{\mathcal{H}(D;\rho)\,|\,\rho\in\operatorname{Conj}(G_H,G)\}, \\
\Phi_\theta^{\rm conj}(D)&:=
\{\Phi_\theta(D;\rho)\,|\,\rho\in\operatorname{Conj}(G_H,G)\}
\end{align*}
as ``multisets of multisets''.
We remark that, for $X_Y$-colorings $C$ and $C_{D,E}$ in
Lemma~\ref{lem:W(D;C)_Rmove}, we have $\rho_C=\rho_{C_{D,E}}$.
Then, by Lemmas~\ref{lem:W(D;C)_2-cycle}--\ref{lem:for_orientation}, we
have the following theorem.

\begin{theorem} \label{thm:invariants}
Let $X$ be a $G$-family of quandles, $Y$ an $X$-set.
Let $\theta$ be a $2$-cocycle of $C^*(X;A)_Y$.
Let $H$ be a handlebody-link represented by a diagram $D$.
Then the following are invariants of a handlebody-link $H$.
\begin{align*}
&\mathcal{H}(D),&&\Phi_\theta(D),&&
\mathcal{H}^{\rm hom}(D),&&\Phi_\theta^{\rm hom}(D),&&
\mathcal{H}^{\rm conj}(D),&&\Phi_\theta^{\rm conj}(D).
\end{align*}
\end{theorem}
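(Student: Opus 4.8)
The plan is to reduce everything to invariance under the six local moves and then assemble the lemmas already in hand. By Theorem~\ref{thm:Rmove}, two diagrams represent equivalent handlebody-links if and only if they are connected by a finite sequence of R1--R6 moves, so it suffices to treat the case in which $D$ and $E$ differ by a single such move (together with the harmless re-choice of orientations). The entire theorem is then a matter of checking that each of the six quantities is unchanged when passing from $D$ to $E$.

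First I would handle the basic multisets. Lemma~\ref{lem:XY-coloring} supplies, for each move, a color-matching assignment $C\mapsto C_{D,E}$ from $\operatorname{Col}_X(D)_Y$ to $\operatorname{Col}_X(E)_Y$; applying the lemma to the reverse move produces an inverse, so this is a bijection. Lemma~\ref{lem:W(D;C)_2-cycle} guarantees that $W(D;C)$ is a $2$-cycle, so its class $[W(D;C)]\in H_2(X)_Y$ is defined, and Lemma~\ref{lem:W(D;C)_Rmove} gives $[W(D;C)]=[W(E;C_{D,E})]$. Since $C\mapsto C_{D,E}$ is a bijection, the multisets $\mathcal{H}(D)$ and $\mathcal{H}(E)$ coincide. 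For $\Phi_\theta$, I would use that a $2$-cocycle $\theta$ vanishes on boundaries, so $\theta(W(D;C))$ depends only on $[W(D;C)]$; combined with the previous identity this yields $\theta(W(D;C))=\theta(W(E;C_{D,E}))$ and hence $\Phi_\theta(D)=\Phi_\theta(E)$. The second assertion of Lemma~\ref{lem:W(D;C)_2-cycle} additionally makes the value independent of the choice of cocycle within its cohomology class.

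Next I would upgrade these to the refined and ``multiset of multisets'' versions. The remark preceding the theorem records $\rho_C=\rho_{C_{D,E}}$, so the bijection $C\mapsto C_{D,E}$ restricts, for each fixed $\rho$, to a bijection $\operatorname{Col}_X(D;\rho)_Y\to\operatorname{Col}_X(E;\rho)_Y$. This immediately gives $\mathcal{H}(D;\rho)=\mathcal{H}(E;\rho)$ and $\Phi_\theta(D;\rho)=\Phi_\theta(E;\rho)$, and because the bijection preserves the partition of colorings indexed by $\operatorname{Hom}(G_H,G)$, it identifies the whole families, establishing invariance of $\mathcal{H}^{\rm hom}$ and $\Phi_\theta^{\rm hom}$. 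For the conjugacy versions I would invoke Lemma~\ref{lem:for_conjugation}, which shows $\mathcal{H}(D;\rho)$ and $\Phi_\theta(D;\rho)$ depend only on the conjugacy class of $\rho$, so the families indexed by $\operatorname{Conj}(G_H,G)$ are well-defined and invariant. Finally, Lemma~\ref{lem:for_orientation} eliminates the dependence on the auxiliary edge-orientations, so all six quantities descend to genuine invariants of the unoriented spatial trivalent graph, that is, of the handlebody-link $H$.

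The main obstacle I anticipate is not computational but organizational: one must verify that a \emph{single} correspondence $C\mapsto C_{D,E}$ simultaneously respects homology classes, $2$-cocycle evaluations, the partition by $\rho$, the passage to conjugacy classes, and the unoriented setting, so that all six statements follow uniformly rather than from six separate arguments. Since each of these compatibilities has already been isolated in Lemmas~\ref{lem:W(D;C)_2-cycle}--\ref{lem:for_orientation} and the preceding remark, the theorem should follow by assembling them, with no new estimate or construction required.
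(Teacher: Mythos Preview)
Your proposal is correct and follows essentially the same route as the paper: the theorem is stated there as an immediate consequence of Lemmas~\ref{lem:W(D;C)_2-cycle}--\ref{lem:for_orientation} together with the remark that $\rho_C=\rho_{C_{D,E}}$, and you have simply spelled out how these pieces fit together. No additional argument is needed.
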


We denote the invariants of $H$ given in Theorem~\ref{thm:invariants} by
\begin{align*}
&\mathcal{H}(H),&&\Phi_\theta(H),&&
\mathcal{H}^{\rm hom}(H),&&\Phi_\theta^{\rm hom}(H),&&
\mathcal{H}^{\rm conj}(H),&&\Phi_\theta^{\rm conj}(H),
\end{align*}
respectively.

Let $\{y\}$ be a trivial $X$-set.
For the trivial $2$-cocycle $0$ of $C^*(X;A)_{\{y\}}$, we have
\begin{align*}
\Phi_0(H)
&=\{0\,|\,C\in\operatorname{Col}_X(D)_{\{y\}}\}, \\
\Phi_0^{\rm hom}(H)
&=\{\{0\,|\,C\in\operatorname{Col}_X(D;\rho)_{\{y\}}\}\,|\,
\rho\in\operatorname{Hom}(G_H,G)\}, \\
\Phi_0^{\rm conj}(H)
&=\{\{0\,|\,C\in\operatorname{Col}_X(D;\rho)_{\{y\}}\}\,|\,
\rho\in\operatorname{Conj}(G_H,G)\}.
\end{align*}
Thus
\begin{align*}
\#\operatorname{Col}_X(H)
&:=\#\operatorname{Col}_X(D)_{\{y\}}, \\
\#\operatorname{Col}_X^{\rm hom}(H)
&:=\{\#\operatorname{Col}_X(D;\rho)_{\{y\}}\,|\,
\rho\in\operatorname{Hom}(G_H,G)\}, \\
\#\operatorname{Col}_X^{\rm conj}(H)
&:=\{\#\operatorname{Col}_X(D;\rho)_{\{y\}}\,|\,
\rho\in\operatorname{Conj}(G_H,G)\}
\end{align*}
are invariants of a handlebody-link $H$ represented by a diagram $D$,
where $\#S$ denotes the cardinality of a multiset $S$.
We remark that these invariants do not depend on the choice of the
singleton set $\{y\}$.

We denote by $H^*$ the mirror image of a handlebody-link $H$.
Then we have the following theorem.

\begin{theorem}\label{theorem:mirror}
For a handlebody-link $H$, we have
\begin{align*}
&\mathcal{H}(H^*)=-\mathcal{H}(H), &
&\Phi_\theta(H^*)=-\Phi_\theta(H), \\
&\mathcal{H}^{\rm hom}(H^*)=-\mathcal{H}^{\rm hom}(H), &
&\Phi_\theta^{\rm hom}(H^*)=-\Phi_\theta^{\rm hom}(H), \\
&\mathcal{H}^{\rm conj}(H^*)=-\mathcal{H}^{\rm conj}(H), &
&\Phi_\theta^{\rm conj}(H^*)=-\Phi_\theta^{\rm conj}(H),
\end{align*}
where $-S=\{-a\,|\,a\in S\}$ for a multiset $S$.
\end{theorem}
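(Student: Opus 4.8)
The plan is to represent $H^{*}$ by the diagram $D^{*}$ obtained by reflecting a diagram $D$ of $H$ across a line in the plane of $D$. Such a reflection is induced by an orientation-reversing self-homeomorphism of $S^3$, so $D^{*}$ is a diagram of $H^{*}$; it preserves over/under information (hence arcs, regions, and the underlying trivalent graph correspond bijectively) while reversing the planar orientation, so that the crossing $\chi^{*}$ of $D^{*}$ corresponding to a crossing $\chi$ of $D$ satisfies $\epsilon(\chi^{*})=-\epsilon(\chi)$. Granting a bijection $\phi\colon\operatorname{Col}_X(D)_Y\to\operatorname{Col}_X(D^{*})_Y$ together with the identity $[W(D^{*};\phi(C))]=-[W(D;C)]\in H_2(X)_Y$ and a compatibility $\rho_{\phi(C)}=\overline{\rho_C}$ for a suitable bijection $\rho\mapsto\overline{\rho}$ of $\operatorname{Hom}(G_H,G)$ (which descends to $\operatorname{Conj}(G_H,G)$), all six equalities follow at once. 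Indeed $\theta(W(D^{*};\phi(C)))=-\theta(W(D;C))$ because $\theta$ is a $2$-cocycle and hence annihilates $\operatorname{im}\partial_3$, so $\Phi_\theta(H^{*})=-\Phi_\theta(H)$ and $\mathcal{H}(H^{*})=-\mathcal{H}(H)$; the homomorphism- and conjugacy-refined versions then follow by negating each inner multiset and reindexing along $\rho\mapsto\overline{\rho}$.

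I would build $\phi$ from the good involution $f\colon Q\to Q$, $f(x,g)=(x,g^{-1})$. The key structural fact is that the inverse of $*^{g}$ in each quandle $(X,*^{g})$ is $*^{g^{-1}}$ (this is exactly Proposition~\ref{prop:associated_quandle}(1)), so the inverse operation of the associated quandle is $(x,g)\,\overline{*}\,(y,h)=(x*^{h^{-1}}y,hgh^{-1})$ and $f$ is an isomorphism $(Q,*)\to(Q,\overline{*})$. Define $\phi(C)$ by keeping every region color unchanged and replacing each arc color $C(\alpha)$ by $f(C(\alpha))$ on the reflected arc. The crossing condition is preserved because reflection exchanges the source and target under-arcs of the over-arc, turning $C(\chi_2)=C(\chi_1)*C(\chi_3)$ into $f(C(\chi_1))=f(C(\chi_2))*f(C(\chi_3))$, which holds since $(a\,\overline{*}\,c)*c=a$. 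The vertex condition is preserved because $f$ inverts the $G$-coordinate while reflection reverses the cyclic order of the incident arcs. The region condition is preserved because the $X$-set axioms give $y*^{g}x=z\Rightarrow z*^{g^{-1}}x=y$, which is precisely what is needed when $C(\alpha)=(x,g)$ is replaced by $f(C(\alpha))=(x,g^{-1})$ and the two regions facing $\alpha$ are interchanged. Since $f$ is an involution, $\phi$ is a bijection, and $\rho_{\phi(C)}$ is $\rho_C$ post-composed with inversion and transported by the reflection isomorphism $G_{H^{*}}\cong G_H$, giving the required $\rho\mapsto\overline{\rho}$.

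The heart of the argument is the weight identity $[W(D^{*};\phi(C))]=-[W(D;C)]$. I would prove it crossing by crossing, showing that in $C_2(X)_Y$ one has $w(\chi^{*};\phi(C))=-w(\chi;C)$ (after, if necessary, absorbing a boundary $\partial_3$), and then summing, with the vertex contributions cancelling exactly as in the proof of Lemma~\ref{lem:W(D;C)_2-cycle}. Two features make this more than a bare sign flip: reflection sends the distinguished region $R_\chi$, which lies on one side of the over-arc, to a region on the side of $R_{\chi^{*}}$ opposite to it, so its color must first be moved across the over-arc by the region rule $C(\alpha_1)*C(\alpha)=C(\alpha_2)$; and the $G$-coordinates have been inverted by $f$. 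Both effects are reconciled by exactly the type of $C_2$-identity already exploited in the proof of Lemma~\ref{lem:for_orientation}, namely
\[
(y,(x_1,g_1),(x_2,g_2))=((y*(x_1,g_1))*(x_2,g_2),(x_1,g_1^{-1})*(x_2,g_2),(x_2,g_2^{-1})),
\]
which simultaneously inverts the $G$-coordinates and shifts the region and source colors.

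I expect this per-crossing reconciliation to be the main obstacle. One must track the reflected positions of $R_\chi$, $\chi_1$, $\chi_3$ together with the two crossing signs, and verify, separately for a positive and for a negative model crossing, that the transformed triple differs from $-w(\chi;C)$ only by the defining relations of $D_2(X)_Y$ (and possibly $\partial_3$ of an explicit $3$-chain). Once this bookkeeping is settled, the computation of Lemma~\ref{lem:W(D;C)_2-cycle} shows that the vertex terms assemble into elements of $D_1(X)_Y$ and cancel, so $W(D^{*};\phi(C))$ and $-W(D;C)$ are homologous in $C_*(X)_Y$, which completes the proof.
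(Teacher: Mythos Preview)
Your approach is correct and can be completed, but it is considerably more elaborate than the paper's. The paper simply transports the coloring by the planar reflection $\varphi(x,y)=(-x,y)$: one takes $C\circ\varphi$ as the coloring of $\varphi(D)$, with the orientation on $\varphi(D)$ chosen so that the \emph{normal} of each arc is the $\varphi$-image of its normal in $D$ (equivalently, push forward tangents and reverse; this is legitimate because Lemma~\ref{lem:for_orientation} has already made the invariants orientation-independent). Since every coloring condition and the triple $(R_\chi,\chi_1,\chi_3)$ defining the weight are phrased purely in terms of normals, $C\circ\varphi$ is automatically a valid $X_Y$-coloring and the weight triple at each crossing is literally unchanged; only $\epsilon(\chi)$ flips. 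Thus $w(\varphi(\chi);C\circ\varphi)=-w(\chi;C)$ holds already in $B_2(X)_Y$, with no $D_2$-relations, no $\partial_3$-terms, and no vertex bookkeeping. The bijection on $\operatorname{Hom}(G_H,G)$ is just $\widetilde{\varphi}_*$.

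What your construction does is compose the paper's bijection with the orientation-reversal map of Lemma~\ref{lem:for_orientation}: applying the good involution $f$ to arc colors is exactly how that lemma trades an orientation reversal for a change of arc colors. The ``main obstacle'' you anticipate is therefore already solved by that lemma; the $C_2(X)_Y$-identity you quote from its proof gives $w(\chi^*;\phi(C))=-w(\chi;C)$ in $C_2(X)_Y$ on the nose, so no boundaries appear and your closing appeal to vertex cancellation via Lemma~\ref{lem:W(D;C)_2-cycle} is unnecessary. In short, nothing in your plan is wrong, but you are re-deriving orientation independence inside the mirror argument instead of using it as a black box, and this is what makes the bookkeeping look heavier than it needs to be.
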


\begin{proof}
Let $D$ be a diagram of a handlebody-link $H$.
We suppose that $D$ is depicted in an $xy$-plane $\mathbb{R}^2$.
Let $\varphi:\mathbb{R}^2\to\mathbb{R}^2$ be the involution defined by
$\varphi(x,y)=(-x,y)$.
Let $\widetilde{\varphi}:S^3\to S^3$ be the involution defined by
$\varphi(x,y,z)=(-x,y,z)$ and $\varphi(\infty)=\infty$, where we regard
the $3$-sphere $S^3$ as $\mathbb{R}^3\cup\{\infty\}$.
Then $\varphi(D)$ is a diagram of the handlebody-link
$H^*=\widetilde{\varphi}(H)$.
For $\rho\in\operatorname{Hom}(G_H,G)$ and
$C\in\operatorname{Col}_X(D;\rho)_Y$, we have
$\widetilde{\varphi}_*(\rho)\in\operatorname{Hom}(G_{H^*},G)$ and
$C\circ\varphi\in\operatorname{Col}_X(\varphi(D);\widetilde{\varphi}_*(\rho))_Y$,
where $\widetilde{\varphi}_*$ is the isomorphism induced by
$\widetilde{\varphi}$.
For each crossing $\chi$ of $D$,
$\epsilon(\chi)=-\epsilon(\varphi(\chi))$, and hence we have
$w(\varphi(\chi),C\circ\varphi)=-w(\chi,C)$.
Then $[W(\varphi(D);C\circ\varphi)]=-[W(D;C)]$, which implies the
equalities in this theorem.
\end{proof}

\section{Applications}
\label{sec:applications}

In this section, we calculate cocycle invariants defined in the previous
section for the handlebody-knots $0_1,\ldots,6_{16}$ in the table given
in~\cite{IshiiKishimotoMoriuchiSuzuki11}, by using a 2-cocycle given by
Nosaka \cite{Nosaka}.
This calculation enables us to distinguish some of handlebody-knots from
their mirror images, and a pair of handlebody-knots whose complements
have isomorphic fundamental groups.

Let $G=SL(2;\mathbb{Z}_3)$ and $X=(\mathbb{Z}_3)^2$.
Then $X$ is a $G$-family of quandles with the proper binary operation as
given in Proposition~\ref{prop:examples_G-family} (2).
Let $Y$ be the trivial $X$-set $\{y\}$.
We define a map $\theta:Y\times(X\times G)^2\rightarrow \mathbb{Z}_3$ by
\[ \theta(y,(x_1,g_1),(x_2,g_2))
:=\lambda(g_1)\det(x_1-x_2,x_2(1-g_2^{-1})), \]
where the abelianization $\lambda:G\rightarrow \mathbb{Z}_3$ is given by
\[ \lambda\begin{pmatrix}a&b\\c&d\end{pmatrix}=(a+d)(b-c)(1-bc). \]
By \cite{Nosaka}, the map $\theta$ is a 2-cocycle of
$C^*(X;\mathbb{Z}_3)_Y$.
Table~\ref{table-appl} lists the invariant $\Phi_{\theta}^{\rm conj}(H)$
for the handlebody-knots $0_1,\dots,6_{16}$.
We represent the multiplicity of elements of a multiset by using
subscripts.
For example, $\{\{0_2,1_3\}_1,\{0_3\}_2\}$ represents the multiset
$\{\{0,0,1,1,1\},\{0,0,0\},\{0,0,0\}\}$.

\begin{table}
\begin{center}
\begin{tabular}{|c||l|}
\hline
& $\Phi_{\theta}(H)$
\\ \hline
$0_1$ & $\{\{0_9\}_{76}\}$
\\ \hline
$4_1$ & $\{\{0_9\}_{83},\{0_{27}\}_{22},\{0_{81}\}_3\}$
\\ \hline
$5_1$ & $\{\{0_9\}_{76}\}$
\\ \hline
$5_2$ & $\{\{0_9\}_{95},\{0_{27}\}_6,\{0_{81}\}_1,\{0_9,1_{18}\}_4,\{0_{27},1_{54}\}_2\}$
\\ \hline
$5_3$ & $\{\{0_9\}_{102},\{0_{27}\}_4,\{0_{27},2_{54}\}_2\}$
\\ \hline
$5_4$ & $\{\{0_9\}_{74},\{0_{81}\}_2\}$
\\ \hline
$6_1$ & $\{\{0_9\}_{91},\{0_{27}\}_{16},\{0_{81}\}_1\}$
\\ \hline
$6_2$ & $\{\{0_9\}_{106},\{0_{45},1_{18},2_{18}\}_2\}$
\\ \hline
$6_3$ & $\{\{0_9\}_{74},\{0_{27}\}_2\}$
\\ \hline
$6_4$ & $\{\{0_9\}_{76}\}$
\\ \hline
$6_5$ & $\{\{0_9\}_{74},\{0_9,1_{18}\}_2\}$
\\ \hline
$6_6$ & $\{\{0_9\}_{72},\{0_{27}\}_4\}$
\\ \hline
$6_7$ & $\{\{0_9\}_{85},\{0_{27}\}_{16},\{0_{81}\}_3,\{0_{45},1_{18},2_{18}\}_4\}$
\\ \hline
$6_8$ & $\{\{0_9\}_{76}\}$
\\ \hline
$6_9$ & $\{\{0_9\}_{91},\{0_{27}\}_6,\{0_{81}\}_1,\{0_9,1_{18}\}_6,\{0_{27},1_{54}\}_2,\{0_{27},2_{54}\}_2\}$
\\ \hline
$6_{10}$ & $\{\{0_9\}_{76}\}$
\\ \hline
$6_{11}$ & $\{\{0_9\}_{70},\{0_9,1_{18}\}_6\}$
\\ \hline
$6_{12}$ & $\{\{0_9\}_{97},\{0_{81}\}_1,\{0_{9},1_{18}\}_8,\{0_{9},1_{36},2_{36}\}_2\}$
\\ \hline
$6_{13}$ & $\{\{0_9\}_{95},\{0_{27}\}_6,\{0_{81}\}_1,\{0_{9},2_{18}\}_4,\{0_{27},2_{54}\}_2\}$
\\ \hline
$6_{14}$ & $\{\{0_9\}_{119},\{0_{27}\}_6,\{0_{81}\}_{11},\{0_{9},1_{18}\}_{12},\{0_{27},1_{54}\}_{24}\}$
\\ \hline
$6_{15}$ & $\{\{0_9\}_{119},\{0_{27}\}_6,\{0_{81}\}_{11},\{0_{9},2_{18}\}_{12},\{0_{27},1_{54}\}_{24}\}$
\\ \hline
$6_{16}$ & $\{\{0_9\}_{44},\{0_{81}\}_{32}\}$
\\ \hline
\end{tabular}
\caption{}
\label{table-appl}
\end{center}
\end{table}

From Table~\ref{table-appl}, we see that our invariant can distinguish
the handlebody-knots $6_{14}$, $6_{15}$, whose complements have the
isomorphic fundamental groups.
Together with Theorem~\ref{theorem:mirror}, we also see that
handlebody-knots $5_2$, $5_3$, $6_5$, $6_9$, $6_{11}$, $6_{12}$,
$6_{13}$, $6_{14}$, $6_{15}$ are not equivalent to their mirror images.
In particular, the chiralities of $5_3$, $6_5$, $6_{11}$ and $6_{12}$
were not known.
Table~\ref{table-chiral} shows us known facts on the chirality of
handlebody-knots in \cite{IshiiKishimotoMoriuchiSuzuki11} so far.
In the column of ``chirality'', the symbols $\bigcirc$ and $\times$ mean
that the handlebody-knot is amphichiral and chiral, respectively, and
the symbol $?$ means that it is not known whether the handlebody-knot is
amphichiral or chiral.
The symbols $\checkmark$ in the right five columns mean that the
handlebody-knots can be proved chiral by using the method introduced in
the papers corresponding to the columns.
Here, M, II, LL, IKO and IIJO denote the papers \cite{Motto},
\cite{IshiiIwakiri12}, \cite{LeeLee}, \cite{IshiiKishimotoOzawa} and
this paper, respectively.

\begin{table}
\begin{center}
\begin{tabular}{|c||c||c|c|c|c|c|}
\hline
 & {chirality} & M & II & LL & IKO & IIJO
\\ \hline
$0_1$ & $\bigcirc$ & & & & &
\\ \hline
$4_1$ & $\bigcirc$ & & & & &
\\ \hline
$5_1$ & $\times$ & & & $\checkmark$ & &
\\ \hline
$5_2$ & $\times$ & & $\checkmark$ & $\checkmark$ & & $\checkmark$
\\ \hline
$5_3$ & $\times$ & & & & & $\checkmark$
\\ \hline
$5_4$ & $\times$ & & & & $\checkmark$ &
\\ \hline
$6_1$ & $\times$ & $\checkmark$ & & & &
\\ \hline
$6_2$ & $?$ & & & & &
\\ \hline
$6_3$ & $?$ & & & & &
\\ \hline
$6_4$ & $\times$ & & & $\checkmark$ & &
\\ \hline
$6_5$ & $\times$ & & & & & $\checkmark$
\\ \hline
$6_6$ & $\bigcirc$ & & & & &
\\ \hline
$6_7$ & $\bigcirc$ & & & & &
\\ \hline
$6_8$ & $?$ & & & & &
\\ \hline
$6_9$ & $\times$ & & $\checkmark$ & & & $\checkmark$
\\ \hline
$6_{10}$ & $?$ & & & & &
\\ \hline
$6_{11}$ & $\times$ & & & & & $\checkmark$
\\ \hline
$6_{12}$ & $\times$ & & & & & $\checkmark$
\\ \hline
$6_{13}$ & $\times$ & & $\checkmark$ & $\checkmark$ & & $\checkmark$
\\ \hline
$6_{14}$ & $\times$ & & & & $\checkmark$ & $\checkmark$
\\ \hline
$6_{15}$ & $\times$ & & & & $\checkmark$ & $\checkmark$
\\ \hline
$6_{16}$ & $\bigcirc$ & & & & &
\\ \hline
\end{tabular}
\caption{}
\label{table-chiral}
\end{center}
\end{table}

\section{A generalization}
\label{sec:generalization}

In this section, we show that our invariant is a generalization of the
invariant $\Phi_\theta^{\rm I}(H)$ defined by the first and second
authors in~\cite{IshiiIwakiri12}.
We refer the reader to~\cite{IshiiIwakiri12} for the details of the
invariant $\Phi_\theta^{\rm I}(H)$.
We recall the definition of the chain complex for the invariant
$\Phi_\theta^{\rm I}(H)$.

Let $X$ be a $\mathbb{Z}_m$-family of quandles, $Y$ an $X$-set.
Let $B^{\rm I}_n(X)_Y$ be the free abelian group generated by the
elements of $Y\times X^n$ if $n\geq0$, and let $B^{\rm I}_n(X)_Y=0$
otherwise.
We put
\[ ((y,x_1,\ldots,x_i)*^jx,x_{i+1},\ldots,x_n)
:=(y*^jx,x_1*^jx,\ldots,x_i*^jx,x_{i+1},\ldots,x_n) \]
for $y\in Y$, $x,x_1,\ldots,x_n\in X$ and $j\in\mathbb{Z}_m$.
We define a boundary homomorphism
$\partial_n:B^{\rm I}_n(X)_Y\to B^{\rm I}_{n-1}(X)_Y$ by
\begin{align*}
\partial_n(y,x_1,\ldots,x_n)
=&\sum_{i=1}^{n}(-1)^i(y,x_1,\ldots,x_{i-1},x_{i+1},\ldots,x_n) \\
&-\sum_{i=1}^{n}(-1)^i((y,x_1,\ldots,x_{i-1})*^1x_i,x_{i+1},\ldots,x_n)
\end{align*}
for $n>0$, and $\partial_n=0$ otherwise.
Then $B^{\rm I}_*(X)_Y=(B^{\rm I}_n(X)_Y,\partial_n)$ is a chain complex.
Let $D^{\rm I}_n(X)_Y$ be the subgroup of $B^{\rm I}_n(X)_Y$ generated
by the elements of
\[ \bigcup_{i=1}^{n-1}\left\{
(y,x_1,\ldots,x_{i-1},x,x,x_{i+2},\ldots,x_n)
\,\left|\,y\in Y,\,x,x_1,\ldots,x_n\in X
\right.\right\} \]
and
\[ \bigcup_{i=1}^n\left\{\left.
\sum_{j=0}^{m-1}((y,x_1,\ldots,x_{i-1})*^jx_i,x_i,\ldots,x_n)
\,\right|\,y\in Y,\,x_1,\ldots,x_n\in X
\right\}. \]
Then $D^{\rm I}_*(X)_Y=(D^{\rm I}_n(X)_Y,\partial_n)$ is a chain
complex.

We put $C^{\rm I}_n(X)_Y=B^{\rm I}_n(X)_Y/D^{\rm I}_n(X)_Y$.
Then $C^{\rm I}_*(X)_Y=(C^{\rm I}_n(X)_Y,\partial_n)$ is a chain
complex.
For an abelian group $A$, we define the cochain complex
$C_{\rm I}^*(X;A)_Y=\operatorname{Hom}(C^{\rm I}_*(X)_Y,A)$.
We denote by $H^{\rm I}_n(X)_Y$ the $n$th homology group of
$C^{\rm I}_*(X)_Y$.

\begin{proposition} \label{prop:HcongH}
For $n\in\mathbb{Z}$, we have
\[ H^{\rm I}_n(X)_Y\cong H_n(X)_Y. \]
\end{proposition}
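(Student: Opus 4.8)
The plan is to prove the stronger statement that the maps $\phi$ and $\psi$ described below are mutually inverse, making $\phi$ a chain isomorphism $C^{\rm I}_*(X)_Y\to C_*(X)_Y$; this yields the homology isomorphism at once. Throughout I write $G=\mathbb{Z}_m$ additively and exploit that $G$ is abelian, so in the associated quandle $(x,g)*(y,h)=(x*^h y,h^{-1}gh)=(x*^h y,g)$. In particular the $\mathbb{Z}_m$-coordinate is unchanged by $*$, and for $q=(x,g)$ the $X$-set action is $y*q=y*^g x$; this is what lets a telescoping in one coordinate stay anchored to that coordinate's $X$-base.

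First I would define $\phi_n\colon B^{\rm I}_n(X)_Y\to C_n(X)_Y$ on generators by $\phi_n(y,x_1,\dots,x_n)=[(y,(x_1,1),\dots,(x_n,1))]$. To see it descends to $C^{\rm I}_n(X)_Y$, note that a degenerate generator with $x_i=x_{i+1}$ maps to one with adjacent equal coordinates $(x_i,1),(x_i,1)$, hence lies in $D_n(X)_Y$; and for the cyclic generator $\sum_{j=0}^{m-1}((y,x_1,\dots,x_{i-1})*^j x_i,x_i,\dots,x_n)$ the group-multiplication relation of $D_n(X)_Y$ telescopes its $\phi$-image to $[(\dots,(x_i,m),\dots)]-[(\dots,(x_i,0),\dots)]=0$, using $m\equiv 0$ in $\mathbb{Z}_m$. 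The same relation, applied once, shows $\phi$ commutes with $\partial_n$, because the $*^1$ appearing in $\partial^{\rm I}_n$ corresponds exactly to $*(x_i,1)$ under $\phi$, i.e.\ $(x_k,1)*(x_i,1)=(x_k*^1 x_i,1)$.

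Next I would construct an inverse $\psi_n$ by iterated telescoping. In the last coordinate the group-multiplication relation gives $(w,(x_n,g_n))=\sum_{b=0}^{g_n-1}(w*(x_n,b),(x_n,1))$ in $C_n(X)_Y$ (the $g_n=0$ term being degenerate), where $*(x_n,b)$ shifts the earlier coordinates; repeating this from the last coordinate down to the first rewrites $(y,(x_1,g_1),\dots,(x_n,g_n))$ as a $\mathbb{Z}$-linear combination of generators all of whose $\mathbb{Z}_m$-coordinates equal $1$. I take the resulting nested sum, with the trailing $1$'s deleted, as $\psi_n(y,(x_1,g_1),\dots,(x_n,g_n))\in C^{\rm I}_n(X)_Y$ (for $n=1$ this is $\psi_1(y,(x,g))=\sum_{j=0}^{g-1}(y*^j x,x)$, and for $n=2$ it is $\sum_{b<g_2}\sum_{a<g_1}((y*^b x_2)*^a(x_1*^b x_2),\,x_1*^b x_2,\,x_2)$). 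By construction $\phi_n\psi_n=\mathrm{id}$ on $C_n(X)_Y$, since reinstating the $1$'s recovers the defining telescoping identities, and $\psi_n\phi_n=\mathrm{id}$ on $C^{\rm I}_n(X)_Y$, since every $g_i=1$ leaves a single summand. Hence $\phi$ is a bijective chain map, so a chain isomorphism, and $H^{\rm I}_n(X)_Y\cong H_n(X)_Y$.

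The main obstacle is proving that $\psi_n$ is well defined, i.e.\ that it kills $D_n(X)_Y$. Compatibility with the degenerate relations is routine: expanding two adjacent coordinates with the same $X$-base keeps them adjacent and equal after every shift, so the image lands in the degeneracy subgroup of $D^{\rm I}_n(X)_Y$. The serious point is compatibility with the group-multiplication relation — that the expansion of $(x,g+h)$ agrees with the expansion of $(x,g)$ plus the shifted expansion of $(x,h)$. This reduces coordinate by coordinate to the $n=1$ computation, whose only subtlety is that when $g+h\ge m$ the telescoping ranges differ by a full block of $m$ consecutive shifts $\sum_{j=0}^{m-1}(z*^j x,x)$, which is precisely a cyclic generator of $D^{\rm I}_n(X)_Y$. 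Handling this wraparound cleanly in the presence of the nested shifts coming from the other coordinates, and checking that the bookkeeping of those shifts matches on the two sides of the relation, is where the real work lies.
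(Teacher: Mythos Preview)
Your approach is essentially identical to the paper's: it defines the same map $f_n=\phi_n$ sending $(y,x_1,\dots,x_n)$ to $(y,(x_1,1),\dots,(x_n,1))$, writes down the same telescoped inverse $g_n=\psi_n$ (with the nesting recorded in the opposite order, which agrees with yours by the third $G$-family axiom since $G=\mathbb{Z}_m$ is abelian), and observes that $f$ is a chain isomorphism. The well-definedness checks you single out---that $\phi_n$ kills $D^{\rm I}_n(X)_Y$ and, more delicately, that $\psi_n$ kills $D_n(X)_Y$ with the wraparound absorbed by the cyclic generators of $D^{\rm I}_n(X)_Y$---are genuine but the paper leaves them implicit.
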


\begin{proof}
The homomorphism $f_n:C^{\rm I}_n(X)_Y\to C_n(X)_Y$ defined by
\[ f_n((y,x_1,\ldots,x_n))=(y,(x_1,1),\ldots,(x_n,1)) \]
is an isomorphism, since the homomorphism
$g_n:C_n(X)_Y\to C^{\rm I}_n(X)_Y$ defined by
\begin{align*}
&g_n(y,(x_1,s_1),\ldots,(x_n,s_n)) \\
&=\sum_{i_1=0}^{s_1-1}\sum_{i_2=0}^{s_2-1}\cdots\sum_{i_n=0}^{s_n-1}
(\cdots((y*^{i_1}x_1,x_1)*^{i_2}x_2,x_2)\cdots *^{i_n}x_n,x_n)
\end{align*}
is the inverse map of $f_n$.
It is easy to see that $f=\{f_n\}$ is a chain map from
$C^{\rm I}_*(X)_Y$ to $C_*(X)_Y$.
Therefore $H^{\rm I}_n(X)_Y\cong H_n(X)_Y$.
\end{proof}

For a $2$-cocycle $\theta$ of $C_{\rm I}^*(X;A)_Y$, the composition
$\theta\circ g_2$ is a $2$-cocycle of $C^*(X;A)_Y$, and we have
\[ \Phi_\theta^{\rm I}(H)=\Phi_{\theta\circ g_2}^{\rm hom}(H), \]
where $g_2$ is the map defined in Proposition~\ref{prop:HcongH}.
Then our invariant is a generalization of the invariant introduced
in~\cite{IshiiIwakiri12}.

\section*{Acknowledgments}

The authors would like to thank Takefumi Nosaka for valuable discussions
about his cocycles.

\end{document}